\numberwithin{equation}{section}
\definecolor{mycitecolor}{rgb}{1,0,0}
\definecolor{mylinkcolor}{rgb}{0.66,0,0} 	% also used for page back links
\definecolor{myurlcolor}{rgb}{0.33,0,0}
\newcolumntype{P}[1]{>{\centering\arraybackslash}p{#1}}
\theoremstyle{plain}
\newtheorem{theorem}{Theorem}[section]
\newtheorem{corollary}[theorem]{Corollary}
\newtheorem{lemma}[theorem]{Lemma}
\newtheorem{proposition}[theorem]{Proposition}
\theoremstyle{definition}
\newtheorem{problem}[theorem]{Problem}
\newtheorem*{remark*}{Remark}
\crefname{figure}{Figure}{Figures}			% changing from default "Fig."
\crefname{equation}{}{}					% this will display the usual (x)
\newcommand{\ldiv}{\backslash}
\newcommand{\rdiv}{/}
\newcommand{\inv}{^{-1}}
\newcommand{\exponent}[1]{\mathrm{exp}(#1)}
\newcommand{\aut}[1]{\mathrm{Aut}(#1)}
\newcommand{\mlt}[1]{\mathrm{Mlt}(#1)}
\newcommand{\rmlt}[1]{\mathrm{Mlt}_r(#1)}
\newcommand{\lmlt}[1]{\mathrm{Mlt}_\ell(#1)}
\newcommand{\linn}[1]{\mathrm{Inn}_\ell(#1)}
\newcommand{\rinn}[1]{\mathrm{Inn}_r(#1)}
\newcommand{\inn}[1]{\mathrm{Inn}(#1)}
\newcommand{\nuc}[1]{\mathrm{Nuc}(#1)}
\newcommand{\lnuc}[1]{\mathrm{Nuc}_\ell(#1)}
\newcommand{\rnuc}[1]{\mathrm{Nuc}_r(#1)}
\newcommand{\mnuc}[1]{\mathrm{Nuc}_m(#1)}
\newcommand{\com}[1]{\mathrm{Com}(#1)}
\newcommand{\lt}{L}
\newcommand{\rt}{R}
\newcommand{\tm}{T}
\newcommand{\Z}{C}
\def\smallsudoku#1#2#3#4#5#6#7#8#9
\renewcommand{\arraystretch}{0.82}
\begin{document}

\title{Bol loops of order 27}

\author{Alexander Grishkov}
\author{Michael Kinyon}
\author{Petr Vojt\v echovsk\'y}

\address[Kinyon,Vojt\v{e}chovsk\'y]{Department of Mathematics\\ University of Denver\\ 2390 S.~York St.\\ Denver, CO 80208, USA}
\address[Grishkov]{Institute of Mathematics and Statistics, University of S\~ao Paulo, Brazil and Sobolev Institute of Mathematics, Omsk, Russia}

\email[Grishkov]{shuragri@gmail.com}
\email[Kinyon]{michael.kinyon@du.edu}
\email[Vojt\v{e}chovsk\'y]{petr.vojtechovsky@du.edu}

\thanks{A.~Grishkov supported by FAPESP (grant 2018/23690-6), CNPq (grants 307593/2023-1 and 406932/2023-9), Brazil, and in accordance with the state task of the IM SB RAS, project FWNF-2022-003, Russia. P.~Vojt\v{e}chovsk\'y supported by the Simons Foundation Mathematics and Physical Sciences Collaboration Grant for Mathematicians MPS-TSM-00855097.}

\keywords{Bol loop, Bruck loop, classification of Bol loops, solvability, central nilpotence}

\subjclass{20N05}

\begin{abstract}
We classify Bol loops of order $27$, using a combination of theoretical results and computer search. There are $15$ Bol loops of order $27$, including five groups. New constructions for the ten nonassociative Bol loops of order $27$ are given.
\end{abstract}

\maketitle

\section{Introduction}

Classifying algebras up to isomorphism in a given variety is an important task in abstract algebra. In this paper we will classify Bol loops of order $27$. The classification is made complicated by the fact that, unlike for $p$-groups or Moufang $p$-loops, Bol $p$-loops are not necessarily centrally nilpotent.

It turns out that all $15$ Bol loops of order $27$ have already appeared in the literature. Our result can therefore be summarized as follows: \emph{No additional Bol loops of order $27$ exist}. We also present compact constructions of all Bol loops of order $27$ and list some invariants by which the loops can be recognized.

\medskip

A loop is \emph{right Bol} if it satisfies the identity
\begin{equation}\label{Eq:RBol}
	((x\cdot y)\cdot z)\cdot y = x\cdot ((y\cdot z)\cdot y).
\end{equation}
Left Bol loops are loops satisfying the dual (mirror image) of the identity \cref{Eq:RBol}. The much-studied Moufang loops are precisely the loops that are both right Bol and left Bol.

\begin{remark*}
We will work with right Bol loops here. All results obtained in this paper for right Bol loops can be dualized to left Bol loops. It is customary to refer to right Bol loops or left Bol loops simply as \emph{Bol loops}, as we have done in the title and in the narrative above. However, we employ this abbreviation only when it is safe to do so. We therefore avoid statements such as ``Bol loops have the right inverse property,'' which is meant to be a shorthand for the true statement ``right Bol loops have the right inverse property,'' but which can potentially be read as one of the false statements ``left Bol loops have the right inverse property'' or ``Bol loops have the inverse property."
\end{remark*}

\subsection{Related classification results for Bol loops}

Bol loops were introduced by Bol \cite{Bol} in the context of finite geometry. The first systematic algebraic study of Bol loops is due to Robinson \cite{Robinson_thesis,Robinson}, who proved, among other results, that any Bol loop of prime order is a group.

Let $p\ne q$ be primes. Burn showed that Bol loops of order $p^2$ and $2p$ are groups and he classified nonassociative Bol loops of order $8$ \cite{BurnI}. Kinyon, Nagy and Vojt\v{e}chovsk\'y classified Bol loops of order $pq$ \cite{KNVpq}, building upon \cite{NiederreiterRobinson_pq}. (See \cite{Vojtechovsky} for a classification of Bol loops of order $pq$ up to isotopism.) For every odd prime $p$, there are precisely two nonassociative right Bol loops of order $2p^2$ \cite{BurnIII,SharmaSolarin_2p2}.

There are many interesting examples of finite simple non-Moufang Bol loops \cite{BaumeisterStein,Nagy_simple,Nagy_simple2}, making the classification of finite simple Bol loops challenging.

\emph{Right Bruck loops} are right Bol loops satisfying the property $(xy)^{-1}=x^{-1}y^{-1}$. Bruck loops of order $p^3$ (resp. $pq$) were classified in \cite{BiancoBonatto} (resp. \cite{KNVpq}).

\subsection{Bol loops of order $27$ in the literature}

Let us briefly describe where all Bol loops of order $27$ appeared for the first time as far as we know.

For every prime $p$, there are precisely $5$ groups of order $p^3$. For an odd prime $p$, this was proved independently by Cole and Glover \cite{ColeGlover}, H\"older \cite{Holder} and Young \cite{Young}.

Using a backtracking algorithm, Moorhouse \cite{Moorhouse} constructed many Bol loops of small orders, including the $8$ nonassociative Bol loops of order $27$ with a nontrivial center.

One of the two nonassociative Bol loops of order $27$ with trivial center was found by Keedwell \cite{Keedwell1963,Keedwell1965} in a different context---the cited papers do not mention that the loop satisfies a Bol identity.
Kinyon noticed that Keedwell's loop $Q$ is a Bol loop and discovered the second Bol loop with trivial center by investigating all loop isotopes of $Q$. The two loops then appeared in \cite{FoguelKinyon} and \cite{KinyonWanless}.

\section{Loops}

See \cite{Bruck} for an introduction to loop theory. A \emph{loop} $(Q,\cdot,\ldiv,\rdiv,1)$ is a set $Q$ with binary operations $\cdot$, $\ldiv$, $\rdiv$ and an element $1\in Q$ satisfying the identities $x\cdot(x\ldiv y) = y = x\ldiv(x\cdot y)$, $(x\cdot y)\rdiv y = x = (x\rdiv y)\cdot y$ and $1\cdot x=x = x\cdot 1$. The theory of loops encompasses the vast space between purely combinatorial objects (normalized latin squares) and highly structured algebras (groups). On the algebraic side, one often studies loops satisfying additional axioms.

\begin{remark*}
From now on, we will also use juxtaposition in place of the multiplication operation $\cdot$, and we declare $\cdot$ to be less binding than the division operations $\rdiv$ and $\ldiv$, which will in turn be less binding than the juxtaposition. So, for instance, $x/yz\cdot u\ldiv v$ stands for $(x/(y\cdot z))\cdot (u\ldiv v)$.
\end{remark*}

For a loop $Q$ and $x\in Q$, let $\lt_x:Q\to Q$, $y\mapsto \lt_x(y)=xy$ be the \emph{left translation} by $x$ in $Q$, and $\rt_x:Q\to Q$, $y\mapsto \rt_x(y)=yx$ the \emph{right translation} by $x$ in $Q$. Denote by
\begin{displaymath}
	\lmlt{Q} = \langle \lt_x:x\in Q\rangle,\quad \rmlt{Q} = \langle \rt_x:x\in Q\rangle,\quad \mlt{Q} = \langle \rt_x,\lt_x:x\in Q\rangle
\end{displaymath}
the \emph{left multiplication group}, the \emph{right multiplication group} and the \emph{multiplication group} of $Q$, respectively.

The \emph{left inner mapping group} $\linn{Q}$,  the \emph{right inner mapping group} $\rinn{Q}$ and the \emph{inner mapping group} $\inn{Q}$ are then the stabilizers of $1$ in $\lmlt{Q}$, $\rmlt{Q}$ and $\mlt{Q}$, respectively. With
\begin{displaymath}
	\lt_{x,y} = \lt_{xy}^{-1}\lt_x\lt_y,\quad \rt_{x,y} = \rt_{yx}^{-1}\rt_x\rt_y,\quad \tm_x=\lt_x\inv\rt_x,
\end{displaymath}
it is well known that $\linn{Q}=\langle \lt_{x,y}:x,y\in Q\rangle$, $\rinn{Q} = \langle \rt_{x,y}:x,y\in Q\rangle$ and $\inn{Q}=\langle \lt_{x,y},\rt_{x,y},\tm_x:x,y\in
Q\rangle$.

A subloop $N\le Q$ is \emph{normal} in $Q$, denoted by $N\unlhd Q$, if $\varphi(N)=N$ for all $\varphi\in\inn{Q}$. The \emph{factor loop} $Q/N$ is then defined as usual.

For a loop $Q$ consider
\begin{align*}
	\lnuc{Q} &= \{x\in Q:x\cdot yz = xy\cdot z\text{ for all }y,z\in Q\},\\
	\mnuc{Q} &= \{x\in Q:y\cdot xz = yx\cdot z\text{ for all }y,z\in Q\},\\
	\rnuc{Q} &= \{x\in Q:y\cdot zx = yz\cdot x\text{ for all }y,z\in Q\},\\
	\nuc{Q} &= \lnuc{Q}\cap\mnuc{Q}\cap\rnuc{Q},
\end{align*}
the \emph{left nucleus}, \emph{middle nucleus}, \emph{right nucleus} and \emph{nucleus} of $Q$, respectively. Each of the four nuclei is a subloop of $Q$. Note that the elements of $\lnuc{Q}$ are precisely the fixed points of $\rinn{Q}$. Let also
\begin{align*}
	\com{Q} &= \{x\in Q:xy = yx\text{ for all }y\in Q\},\\
	Z(Q) &= \nuc{Q}\cap\com{Q}
\end{align*}
be the \emph{commutant} and the \emph{center} of $Q$, respectively. The commutant is not necessarily a subloop of $Q$, even in Bol loops \cite{KPV_comm}. The center is always a normal subloop of $Q$. A loop $Q$ is said to be \emph{centrally nilpotent} if the series
\begin{displaymath}
	Q,\ Q/Z(Q),\ (Q/Z(Q))/Z(Q/Z(Q)),\ \dots
\end{displaymath}
reaches the trivial loop in finitely many steps.

Let $Z$ be an abelian group and $F$ a loop. A loop $Q$ is a \emph{central extension} of $Z$ by $F$ if $Z\le Z(Q)$ and $Q/Z$ is isomorphic to $F$. It is well known that up to isomorphism, all central extensions of $Z=(Z,+,0)$ by $F=(F,\cdot,1)$ are obtained by modifying the direct product $Z\times F$ as $(a,x)*(b,y) = (a+b+\theta(x,y),xy)$, where $\theta:F\times F\to Z$ is a \emph{loop cocycle}, that is, a mapping satisfying $\theta(1,x)=\theta(y,1)=0$ for all $x,y\in F$.

A loop $Q$ is \emph{power associative} if for every $x\in Q$ the subloop $\langle x\rangle$ of $Q$ is associative, that is, $\langle x\rangle$ is a group. In a power associative loop $Q$, we can safely use the notation $x^n$ to denote powers of $x\in Q$, with $n$ any integer. In particular, $x^{-1}$ is the two-sided inverse of $x\in Q$.

A loop $Q$ has the \emph{right inverse property} if it has two-sided inverses and satisfies $(xy)y^{-1}=x = (xy^{-1})y$. A power associative loop $Q$ is \emph{right power alternative} if $(xy^n)y^m = xy^{n+m}$ for all $x,y\in Q$ and $n,m\in\mathbb Z$. If $Q$ is a finite right power alternative loop, then the order $|x|$ of $x\in Q$ divides $|Q|$.

Let $p$ be a prime. A finite loop $Q$ is a \emph{$p$-loop} if $|Q|=p^n$ for some integer $n$.

For a loop $Q$, the \emph{derived subloop} $Q'$ is the smallest normal subloop $N$ of $Q$ such that $Q/N$ is an abelian group. A loop $Q$ is \emph{solvable} if the \emph{derived series}
\begin{displaymath}
	Q\ge Q'\ge Q''\ge\cdots
\end{displaymath}
reaches the trivial loop in finitely many steps. (See \cite{StaVoj} for an alternative definition of solvability for loops based on the commutator theory of universal algebra.)

\section{Constructions for nonassociative Bol loops of order $27$}\label{Sc:Constructions}

In this section we construct the ten nonassociative right Bol loops $B_1,\dots,B_{10}$ of order $27$ so that we can refer to them later. The multiplication tables of the ten loops can be found on the website of the third author. Most calculations used to discover these constructions were performed in the \texttt{GAP} \cite{GAP} package \texttt{RightQuasigroups} \cite{RightQuasigroups}.

\begin{table}
\begin{footnotesize}
\begin{tabular}{lrrrrrrrrrr}
\toprule
$Q$                                 	&$B_1$ 	&$B_2$	&$B_3$ 	&$B_4$	&$B_5$	&$B_6$	&$B_7$	&$B_8$	&$B_9$	  &$B_{10}$\\
$|Z(Q)|$						        &3		&3     	&3		&3		&3		&3		&3		&3		&1		  &1\\					
$\exponent{Q}$                          &9      &9      &9      &9     	&9     	&9      &9     	&9     	&3		  &3\\
$|\{x\in Q:|x|=3\}$                     &2      &14     &8   	&2     	&20  	&14 	&8		&2		&26		  &26\\
$|Q'|$                              	&3      &3      &3    	&3		&3		&3		&3		&3		&9		  &9\\
$|\lnuc{Q}|$                       	    &9    	&9		&9		&9		&9		&9		&9		&9		&9		  &9\\
$\exponent{\lnuc{Q}}$                   &9		&3		&9		&9		&9		&3		&9		&9		&3		  &3\\
$|\{(x,y)\in Q\times Q:xy=yx\}|$        &459	&459	&459	&459   	&405	&405	&405	&405	&153	  &153\\
$|\rmlt{Q}|$                        	&81		&81		&81		&81		&81		&81		&81		&81		&243	  &243\\
$|\lmlt{Q}|$                        	&243    &243	&243	&243	&243	&243	&243	&243	&139968   &139968\\
$|\mlt{Q}|$                         	&2187	&2187	&2187	&2187	&2187	&2187	&2187	&2187	&139968   &139968\\
$|\aut{Q}|$                         	&54     &18		&18		&27		&108	&36		&36		&54		&72		  &144\\
is $Q$ right Bruck?                	    &no     &no     &no     &no     &yes    &yes    &yes    &yes    &no       &no\\
associated right Bruck loop             &$B_5$  &$B_6$  &$B_7$  &$B_8$  &$B_5$  &$B_6$  &$B_7$  &$B_8$  &$\Z_3^3$ &$\Z_3^3$\\
\bottomrule
\end{tabular}
\end{footnotesize}
\medskip
\caption{Some invariants of the ten nonassociative right Bol loops of order $27$.}\label{Tb:Invariants}
\end{table}

Table \ref{Tb:Invariants} summarizes some invariants of these loops. Note that any two loops in the table can be distinguished by, for instance, the size of their automorphism group and the number of elements of order $3$ they contain. Recall that with any right Bol loop $Q$ in which $x\mapsto x^2$ is a permutation we can associate a right Bruck loop $(Q,\circ)$ given by $x\circ y = ((xy^2)x)^{1/2}$. The last row of the table reports the isomorphism type of the associated right Bruck loop. Here, $\Z_n=\{0,\dots,n-1\}$ denotes the cyclic group of order $n$ and $\Z_3^3=\Z_3\times\Z_3\times\Z_3$. We observed computationally that each pair $(B_1,B_5)$, $(B_2,B_6)$, $(B_3,B_7)$, $(B_4,B_8)$ and $(B_9,B_{10})$ consists of isotopic loops. No other loops $B_i$, $B_j$ with $i\ne j$ are isotopic.

\begin{table}
\begin{footnotesize}
\begin{tabular}{lrrrrr}
\toprule
$Q$                                 	&$B_1$ 					&$B_2$						&$B_3$ 						&$B_4$				&$B_5$\\
$\exponent{G}$				&9						&9							&9							&9					&9\\
$|Z(G)|$						&3						&3							&3							&3					&3\\
\texttt{GAP} id of $G$		&[81,9]					&[81,7]						&[81,8]						&[81,10]			&[81,9]\\	
structural information			&$(\Z_9\times\Z_3):\Z_3$&$\Z_3^3:\Z_3$				&$(\Z_9\times\Z_3):\Z_3$		&$\Z_3^2.\Z_3^2$	&$(\Z_9\times\Z_3):\Z_3$\\
\\
$Q$                                 	&$B_6$					&$B_7$						&$B_8$						&$B_9$				&$B_{10}$\\
$\exponent{G}$				&9						&9							&9							&3					&3\\
$|Z(G)|$						&3						&3							&3							&9					&9\\
\texttt{GAP} id of $G$		&[81,7]					&[81,8]						&[81,10]					&[243,37]			&[243,37]\\	
structural information			&$\Z_3^3:\Z_3$			&$(\Z_9\times\Z_3):\Z_3$		&$\Z_3^2.\Z_3^2$			&					&\\
\bottomrule
\end{tabular}
\end{footnotesize}
\medskip
\caption{Some invariants of the right multiplication groups $G=\rmlt{Q}$ of the ten nonassociative right Bol loops of order $27$.}\label{Tb:Mltrs}
\end{table}

Since the structure of right multiplication groups plays an important role in the theory of right Bol loops, we collect some additional information about the right multiplication groups of the ten nonassociative loops $B_1,\dots,B_{10}$ in Table \ref{Tb:Mltrs}. The row of the table with structural information uses notational conventions of \texttt{GAP}.

Note that each of the loops $B_1,\dots,B_8$ has center isomorphic to $\Z_3$ and can therefore be constructed from the factor $Q/Z(Q)$ and a loop cocycle, a $9\times 9$ table with entries in $\Z_3$. However, we opt for alternative constructions, some of which will be useful also in the cases $B_9$ and $B_{10}$ where the center is trivial.

\subsection{The six loops with left nucleus of exponent nine}

For parameters $x,y\in\Z_9^*$ and $r\in\Z_9$, consider the magma $Q(x,y,r)$ defined on $\Z_3\times\Z_9$ by the multiplication formula
\begin{displaymath}
	(u,i)(v,j) = \left(u+v,\ i + f(u,v)j + r\left\lfloor\frac{u+v}{3}\right\rfloor \right),
\end{displaymath}
where for the purposes of the floor function we understand $u$ and $v$ as integers in $\{0,1,2\}$ and where $f:\Z_3\times\Z_3\to\Z_9^*$ is given by
\begin{align*}
	&f(0,0)=1,	&&f(0,1)=1,		&&f(0,2)=1,\\
	&f(1,0)=x,	&&f(1,1)=1/y,	&&f(1,2)=y/x,\\
	&f(2,0)=y,	&&f(2,1)=x/y,	&&f(2,2)=1/x.
\end{align*}
Then
\begin{align*}
	&B_1=Q(1,7,0), &&B_3=Q(1,4,0), &&B_4=Q(1,7,3),\\
	&B_5=Q(4,4,0), &&B_7=Q(7,7,0), &&B_8=Q(4,4,3).
\end{align*}

\subsection{The two loops of exponent nine with left nucleus of exponent three}

Let us start with an auxiliary construction that will be useful here and in the next subsection.

 Let $k\in\Z_9$ and let $K$ be a $3\times 3$ matrix containing every element of $\Z_9$. Then $T(k,K)$ is the $3\times 3$ matrix such that
\begin{itemize}
\item the top left entry is equal to $k$,
\item the top row is a cyclic shift of a row of $K$,
\item every column is a cyclic shift of one of $(0,1,2)$, $(3,4,5)$ and $(6,7,8)$.
\end{itemize}
The assumption that $K$ contains all elements of $\Z_9$ guarantees that $T(k,K)$ is well-defined. (Note that $T(k,K)$ is invariant under permutations of rows of $K$.)
For instance, if
\begin{displaymath}
	k=5\quad\text{and}\quad K = \begin{array}{ccc}2&6&1\\0&4&8\\7&5&3\end{array}
\end{displaymath}
then
\begin{displaymath}
	T(k,K) = \begin{array}{ccc}5&3&7\\3&4&8\\4&5&6\end{array}.
\end{displaymath}

Let $M$, $N$ be two $9\times 9$ matrices with entries in $\Z_9$. Let us view $N$ as a block matrix with blocks $N_{uv}$ of size $3$, where $u,v\in\Z_3$. Suppose that every block $N_{uv}$ contains all elements of $\Z_9$. Let $T(M,N)$ be the $27\times 27$ matrix in which the $3\times 3$ block in (block) row $i\in\Z_9$ and (block) column $j\in\Z_9$ is equal to $T(M_{ij},N_{\lfloor i/3\rfloor\,\lfloor j/3\rfloor})$. We are done with the auxiliary construction.

\medskip

In this subsection, we will specialize to the situation when every $3\times 3$ block of $N$ is the matrix
\begin{displaymath}
	K=\begin{array}{ccc}0&1&2\\3&4&5\\6&7&8
	\end{array},
\end{displaymath}
in which case we will denote the matrix $T(k,K)$ just by $T(k)$ and the matrix $T(M,N)$ just by $T(M)$. (Note that $T(k)$ is a reversed circulant matrix with entries in one of $\{0,1,2\}$, $\{3,4,5\}$ and $\{6,7,8\}$.)

Consider the magma $Q(M)$ on $C_{27}$ whose multiplication table is obtained from the matrix $T(M)$ by adding
\begin{equation}\label{Eq:Z3}
	9((\lfloor i/9\rfloor + \lfloor j/9\rfloor)\text{ mod }3)
\end{equation}
to the entry in row $i\in\Z_{27}$ and column $j\in\Z_{27}$. In effect, $Q(M)$ has a coarse block structure of the cyclic group $\Z_3$ and its fine behavior is governed by $3\times 3$ reversed circulant matrices, each one arising from a single entry of $M$.

Let
\begin{center}
\begin{tabular}{P{10mm}P{50mm}P{10mm}P{10mm}P{50mm}}
$M_2\ = $
&\smallsudoku{0&3&6&0&3&6&0&3&6}{3&6&0&3&6&0&6&1&5}{6&0&3&6&0&3&5&7&0}{0&3&6&0&7&4&1&6&3}{3&6&0&6&3&2&4&0&6}	{6&0&3&5&1&8&7&3&0}{0&8&3&1&5&6&1&8&4}{3&0&8&8&0&4&8&3&2}{6&4&1&4&8&0&4&2&7}
&
&
$M_6\ = $
&\smallsudoku{0&3&6&0&3&6&0&3&6}{3&6&0&3&7&2&6&2&4}{6&0&3&7&0&5&4&8&0}{0&4&6&0&8&4&1&6&4}{3&8&2&6&3&0&4&0&7}{6&0&4&3&1&8&6&5&0}{0&7&3&1&4&6&1&8&3}{3&0&7&6&0&5&7&3&2}{6&5&2&4&7&0&4&1&7}
\end{tabular}
\end{center}
\medskip
Then $B_2=Q(M_2)$ and $B_6 = Q(M_6)$.

\subsection{The two loops with trivial center}

We will again use the auxiliary construction $T(M,N)$ but this time with a nontrivial matrix $N$. Let $M$, $N$ be two $9\times 9$ matrices with entries in $\Z_9$, where again every block $N_{uv}$ contains all elements of $\Z_9$. Let $Q(M,N)$ be the magma whose multiplication table is obtained from the matrix $T(M,N)$ by adding \eqref{Eq:Z3} to the entry in row $i\in\Z_{27}$ and column $j\in\Z_{27}$.

Consider the matrices
\begin{center}
\begin{tabular}{P{10mm}P{50mm}P{10mm}P{10mm}P{50mm}}
$M_9\ = $
&\smallsudoku{0&3&6&0&3&6&0&3&6}{3&6&0&5&8&2&3&6&0}{6&0&3&7&1&4&6&0&3}{0&1&2&0&6&3&0&2&1}{3&4&5&4&1&7&4&3&5}{6&7&8&8&5&2&8&7&6}{0&8&4&0&2&1&0&8&4}{3&2&7&3&5&4&5&1&6}{6&5&1&6&8&7&7&3&2}
&
&
$N_9\ = $
&\smallsudoku{0&1&2&0&1&2&0&1&2}{3&4&5&3&4&5&3&4&5}{6&7&8&6&7&8&6&7&8}{0&3&6&0&5&7&0&8&4}{1&4&7&1&3&8&1&6&5}{2&5&8&2&4&6&2&7&3}{0&7&5&0&6&3&0&5&7}{1&8&3&1&7&4&1&3&8}{2&6&4&2&8&5&2&4&6}
\end{tabular}
\end{center}
and
\begin{center}
\begin{tabular}{P{10mm}P{50mm}P{10mm}P{10mm}P{50mm}}
$M_{10}\ = $
&\smallsudoku{0&3&6&0&3&6&0&3&6}{3&6&0&5&8&2&8&2&5}{6&0&3&7&1&4&4&7&1}{0&1&2&0&4&8&0&2&1}{3&4&5&6&1&5&4&3&5}{6&7&8&3&7&2&8&7&6}{0&3&6&0&2&1&0&4&8}{3&6&0&8&7&6&6&1&5}{6&0&3&4&3&5&3&7&2}
&
&
$N_{10}\ = $
&\smallsudoku{0&1&2&0&1&2&0&1&2}{3&4&5&3&4&5&3&4&5}{6&7&8&6&7&8&6&7&8}{0&4&8&0&5&7&0&8&4}{1&5&6&1&3&8&1&6&5}{2&3&7&2&4&6&2&7&3}{0&4&8&0&4&8&0&5&7}{1&5&6&1&5&6&1&3&8}{2&3&7&2&3&7&2&4&6}
\end{tabular}
\end{center}
\medskip
Then $B_9=Q(M_9,N_9)$ and $B_{10}=Q(M_{10},N_{10})$.

\section{Bol loops: Theoretical results}\label{Sc:Theory}

We will collect several well known results for Bol loops and establish a few new results. Additional results for Bol loops (that we do not need here) are summarized in \cite{FoguelKinyon}.

Right Bol loops are right power alternative \cite{Robinson_thesis}. In particular, right Bol loops are power associative and have the right inverse property.

Let $p$ be a prime. As we have already mentioned in the introduction, Bol loops of order $p$ and $p^2$ are groups \cite{BurnI,Robinson_thesis}. While $p$-groups and Moufang $p$-loops are centrally nilpotent \cite{Drapal,Glauberman,GlaubermanWright}, Bol $p$-loops are not necessarily centrally nilpotent. If $p$ is an odd prime then a finite Bol loop $Q$ is a $p$-loop (that is, $|Q|$ is a $p$-power) if and only if $|x|$ is a $p$-power for every $x\in Q$.

\subsection{Basic structure of Bol loops of order $p^3$}

We start with a well known result whose proof we provide for the sake of completeness. Note that Proposition \ref{Pr:CyclicFactor} applies to Bol loops since Bol loops are power associative.

\begin{proposition}\label{Pr:CyclicFactor}
Let $Q$ be a power associative loop such that $Q/Z(Q)$ is a cyclic group. Then $Q$ is an abelian group.
\end{proposition}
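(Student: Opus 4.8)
The plan is to mimic the classical group-theoretic argument that a group with cyclic central quotient is abelian, while being careful about the weaker associativity available in a power associative loop. First I would pick $g \in Q$ whose image $gZ(Q)$ generates $Q/Z(Q)$; then every coset of $Z(Q)$ has the form $g^n Z(Q)$ for some integer $n$, so every element of $Q$ can be written as $g^n z$ with $z \in Z(Q)$ (here I am using power associativity so that $g^n$ is unambiguous). The key computational step is to show that any two such elements commute and associate freely: given $x = g^m a$ and $y = g^n b$ with $a,b \in Z(Q)$, I want to manipulate products like $xy$ and $yx$ using only the facts that $a,b$ are central (hence both commute and associate with everything) and that powers of the single element $g$ behave like powers in a group. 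The subtle point is that moving the central elements $a,b$ out of the way and reducing to a statement about $g^m$ and $g^n$ requires knowing that $\langle g \rangle$ is a subgroup — which is exactly what power associativity gives — and that $g^m(g^n z) = g^{m+n} z$ and similar rebracketings hold, which follow by first using centrality of $z$ to pull it outside and then applying associativity within $\langle g\rangle$.

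Concretely, I would argue: since $Z(Q)$ is a normal subloop and $Q/Z(Q)$ is generated by $gZ(Q)$, write $x = g^m a$, $y = g^n b$ with $a, b \in Z(Q)$. Using that $a$ and $b$ lie in the nucleus and commutant, $xy = (g^m a)(g^n b) = g^m(a g^n) b = g^m(g^n a) b = (g^m g^n)(ab) = g^{m+n}(ab)$, where the first and third equalities use that $a$ associates with everything, the second uses that $a$ commutes with $g^n$, and the last uses associativity inside the group $\langle g \rangle$ together with $ab = ba$. The same computation with the roles of $x$ and $y$ swapped gives $yx = g^{n+m}(ba) = g^{m+n}(ab)$, since $\langle g\rangle$ is abelian and $Z(Q)$ is abelian. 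Hence $xy = yx$, so $Q$ is commutative. For associativity one runs an entirely analogous bookkeeping: for $x = g^\ell a$, $y = g^m b$, $z = g^n c$ with $a,b,c \in Z(Q)$, both $(xy)z$ and $x(yz)$ collapse to $g^{\ell+m+n}(abc)$ after repeatedly pulling the central factors out through the nucleus and regrouping powers of $g$ inside the subgroup $\langle g\rangle$.

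I would then conclude that $Q$ is a commutative loop in which all products of any three elements associate, i.e. $Q$ is an abelian group. The main obstacle — really the only place demanding care — is the bookkeeping in these rebracketing computations: in a general power associative loop one has no associativity at all beyond single-generated subloops, so every step that regroups a product must be justified either by the centrality (nucleus membership) of one of the factors or by the fact that the remaining powers all lie in the single cyclic subgroup $\langle g \rangle$. Once one is disciplined about tracking which of these two justifications applies at each equality, the argument goes through without incident. I expect the write-up to be short, with the bulk of it being the explicit chain of equalities for $xy = yx$ and a remark that the three-element case is handled the same way.
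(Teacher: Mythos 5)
Your proposal is correct and follows essentially the same route as the paper's proof: write every element as $g^nz$ with $z\in Z(Q)$ (using $(gZ(Q))^n=g^nZ(Q)$), then pull the central factors out of any product and reduce both commutativity and associativity to the corresponding facts inside the group $\langle g\rangle$ guaranteed by power associativity. No gaps; the write-up would match the paper's almost line for line.
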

\begin{proof}
We have $Q/Z(Q)=\langle aZ(Q)\rangle$ for some $a\in Q$. Since $(aZ(Q))^n = a^nZ(Q)$, every element of $Q$ can be written as $a^nz$ for some integer $n$ and $z\in Z(Q)$. Let us consider three arbitrary elements $a^mz_1$, $a^nz_2$ and $a^kz_3$ of $Q$ written in this form. Since central elements associate and commute with all elements of $Q$, we have $(a^mz_1\cdot a^nz_2)(a^kz_3) = (a^ma^n)a^k\cdot (z_1z_2)z_3$ and $(a^mz_1) (a^nz_2\cdot a^kz_3) = a^m(a^na^k)\cdot z_1(z_2z_3)$. By power associativity, $(a^ma^n)a^k = a^m(a^na^k)$. Of course, $(z_1z_2)z_3 = z_1(z_2z_3)$. Hence $Q$ is associative. Similarly, we have $a^mz_1\cdot a^nz_2 =a^ma^n\cdot z_1z_2 = a^na^m\cdot z_2z_1 = a^nz_2\cdot a^mz_1$, proving that $Q$ is commutative.
\end{proof}

\begin{theorem}\label{Th:Structure}
Let $p$ be a prime and let $Q$ be a Bol loop of order $p^3$. Then one of the following situations occurs:
\begin{enumerate}
\item[(i)] $Z(Q)=1$, or
\item[(ii)] $Q$ is an abelian group, or
\item[(iii)] $Z(Q)\cong C_p$ and $Q/Z(Q)\cong C_p\times C_p$.
\end{enumerate}
\end{theorem}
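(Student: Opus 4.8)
The plan is to case-split on the order of $Z(Q)$. Since $Z(Q)$ is a normal subloop of $Q$, cosets of $Z(Q)$ partition $Q$ into blocks of size $|Z(Q)|$, so $|Z(Q)|$ divides $|Q|=p^3$ and hence $|Z(Q)|\in\{1,p,p^2,p^3\}$. The extreme cases are immediate: if $|Z(Q)|=1$ we are in situation (i), and if $|Z(Q)|=p^3$ then $Q=Z(Q)$ is an abelian group, which is situation (ii).

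If $|Z(Q)|=p^2$, then $Q/Z(Q)$ has order $p$ and is therefore a cyclic group. Since Bol loops are power associative, Proposition~\ref{Pr:CyclicFactor} applies and shows that $Q$ is an abelian group, again situation (ii). (This case will in fact turn out to be vacuous, but there is no need to note that in the proof.)

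The remaining case is $|Z(Q)|=p$, and here I expect to land in situation (iii). In this case $Q/Z(Q)$ is a Bol loop of order $p^2$, hence a group by the result of Burn and Robinson already quoted, and a group of order $p^2$ is abelian, so $Q/Z(Q)$ is isomorphic either to $C_{p^2}$ or to $C_p\times C_p$. If $Q/Z(Q)$ were cyclic, Proposition~\ref{Pr:CyclicFactor} would force $Q$ to be an abelian group, whence $Z(Q)=Q$, contradicting $|Z(Q)|=p$. Therefore $Q/Z(Q)\cong C_p\times C_p$; and since $Z(Q)$, being central, is an abelian group of order $p$, it is isomorphic to $C_p$. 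This is exactly situation (iii), completing the argument.

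I do not anticipate a genuine obstacle: the proof is a short bookkeeping argument resting on four ingredients, namely (a) the Lagrange-type fact that the order of a normal subloop divides the order of the loop, (b) the fact that a quotient of a Bol loop by a normal subloop is again a Bol loop, (c) the classification of Bol loops of order $p^2$ as groups, and (d) Proposition~\ref{Pr:CyclicFactor}. The only point requiring mild care is that Proposition~\ref{Pr:CyclicFactor} must be invoked twice: once to handle the case $|Z(Q)|=p^2$, and once more to exclude a cyclic quotient $Q/Z(Q)$ when $|Z(Q)|=p$.
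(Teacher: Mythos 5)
Your proof is correct and follows essentially the same route as the paper: reduce to $|Z(Q)|\in\{1,p,p^2,p^3\}$, note that in the nontrivial cases $Q/Z(Q)$ is a group of order dividing $p^2$, and use Proposition~\ref{Pr:CyclicFactor} to rule out a cyclic quotient, leaving only $C_p\times C_p$. The paper merely compresses your two invocations of Proposition~\ref{Pr:CyclicFactor} into a single sentence covering all cyclic quotients at once.
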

\begin{proof}
Since $Z(Q)\unlhd Q$, we have $|Z(Q)|\in\{1,p,p^2,p^3\}$. Suppose that $Z(Q)>1$. Then $|Q/Z(Q)|\in\{1,p,p^2\}$, so $Q/Z(Q)$ is group. If $Q/Z(Q)$ is cyclic then $Q$ is an abelian group by Proposition \ref{Pr:CyclicFactor}. The only remaining possibility is $Q/Z(Q)\cong C_p\times C_p$.
\end{proof}

\subsection{Solvability of Bol $p$-loops}

For odd primes $p$, Moufang $p$-loops are nilpotent, but Bol $p$-loops are not necessarily nilpotent. However, Nagy proved \cite[Lemma 5.1]{NagyCMUC}:

\begin{theorem}[Nagy]\label{Nagy}
Let $p$ be an odd prime and $Q$ a Bol $p$-loop. Then $Q$ is solvable.
\end{theorem}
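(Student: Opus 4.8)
\medskip
\noindent\textbf{Proof idea.} The plan is an induction on $n$, where $|Q|=p^n$. If $n\le 2$ then $Q$ is a group by the results of Burn and Robinson recalled above, and groups are solvable; this is the base case. For the inductive step it suffices to produce a normal subloop $N$ with $1<N<Q$ that is a group (hence a solvable group): a homomorphic image of a Bol loop is a Bol loop, so $Q/N$ is a Bol $p$-loop of strictly smaller order and is therefore solvable by induction, and one checks that a loop $Q$ with a normal subloop $N$ such that both $N$ and $Q/N$ are solvable is itself solvable. The two natural candidates for $N$ are the center $Z(Q)$ and the left nucleus $\lnuc{Q}$, both of which are groups and, for a right Bol loop, normal subloops of $Q$. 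If $Z(Q)=Q$ then $Q$ is an abelian group; if $\lnuc{Q}=Q$ then $Q$ is associative; and if $1<Z(Q)$ with $Q/Z(Q)$ cyclic then $Q$ is an abelian group by Proposition~\ref{Pr:CyclicFactor}. So the only configuration not immediately handled is that of a nonassociative $Q$ with $Z(Q)=1$ and $\lnuc{Q}=1$.

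The crux is the structural fact that, \emph{for $p$ odd, the right multiplication group $G=\rmlt{Q}$ of a Bol $p$-loop $Q$ is again a $p$-group}; this is exactly where the hypothesis that $p$ is odd is used, and it fails for $p=2$, where finite simple Bol loops of $2$-power exponent exist. The starting point is that the set of right translations $\{R_x:x\in Q\}$ is a twisted subgroup of $G$: it contains $R_1=\mathrm{id}$, it is closed under inverses since $R_x^{-1}=R_{x^{-1}}$ by the right inverse property, and it is closed under $(R_y,R_z)\mapsto R_yR_zR_y$ because the right Bol identity \eqref{Eq:RBol} asserts precisely that $R_yR_zR_y=R_{(yz)y}$. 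Since right Bol loops are right power alternative, $R_x$ has order $|x|$, which is a power of $p$ as $|x|$ divides $|Q|=p^n$; thus $\{R_x:x\in Q\}$ is a transversal of $G$ over the point stabilizer $\rinn{Q}$ consisting entirely of $p$-elements, and it is a twisted subgroup. The assertion that such a configuration forces $G=\langle R_x:x\in Q\rangle$ to be a $p$-group when $p$ is odd is the real content of the theorem, and I expect this to be the main obstacle: it rests on the group-theoretic theory of loops and twisted subgroups of odd order developed by Glauberman and its descendants (ultimately on the solvability of groups of odd order), rather than on any soft manipulation of the Bol identity.

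Granting that $G=\rmlt{Q}$ is a $p$-group, one finishes as follows. As a $p$-group acting transitively on the $p^n>1$ points of $Q$, $G$ admits a chain of subgroups from the stabilizer $\rinn{Q}$ up to $G$ with consecutive indices equal to $p$; using the right inverse property, one shows this chain yields a chain of normal subloops $1=N_0<N_1<\cdots<N_n=Q$ whose successive factors are Bol $p$-loops of smaller order, hence solvable by the induction hypothesis, so $Q$ is solvable. This last step is the right Bol analogue of Vesanen's theorem that a finite loop with solvable multiplication group is solvable, and it bypasses the need to treat the case $Z(Q)=\lnuc{Q}=1$ separately. Alternatively, since $|Q|$ is odd the squaring map of $Q$ is a bijection, so $Q$ has an associated right Bruck loop; Bruck loops of odd order are solvable by Glauberman's theory, and one transfers solvability back along that correspondence to conclude.
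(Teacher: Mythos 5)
The paper does not actually prove this statement; it is quoted verbatim from Nagy's paper (Lemma 5.1 of the cited CMUC article). So your proposal has to be judged on its own merits, and while it correctly identifies the right skeleton --- induction on $|Q|$ plus the structural fact that $G=\rmlt{Q}$ is a $p$-group --- that structural fact is not the obstacle you fear: it is immediate from Glauberman's Theorem~\ref{Glaub} as stated in the paper, since the only prime dividing $|Q|=p^n$ is $p$. The genuine gaps are elsewhere.

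First, you assert that $\lnuc{Q}$ is a normal subloop of a right Bol loop. That is not an available fact --- normality of nuclei in Bol loops is a long-standing open problem --- and nothing in your sketch substitutes for it. (Note that if it were true you would essentially be done: $Z(\rmlt{Q})\neq 1$ already forces $\lnuc{Q}>1$, cf.\ Proposition~\ref{Pr:LeftNuc}(i) with $H=Q$, so the configuration ``$Z(Q)=1$ and $\lnuc{Q}=1$'' that you set aside never occurs.) Second, and more seriously, both of your proposed ways of finishing fail at the same point: producing a subloop that is \emph{normal in $Q$}. A chain of subgroups $\rinn{Q}=G_0<G_1<\dots<G_n=\rmlt{Q}$ with successive indices $p$ yields blocks $G_i(1)$ that are subloops invariant under $\rinn{Q}$, but normality of a subloop requires invariance under all of $\inn{Q}$, including the left inner mappings and the maps $T_x$, which do not live in $\rmlt{Q}$; and there is no ``right Bol analogue of Vesanen's theorem'' to cite --- Vesanen's theorem concerns the full multiplication group $\mlt{Q}$, whose solvability you have not established. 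Likewise, solvability of the associated right Bruck loop $(Q,\circ)$ (which does follow from Glauberman's theory) does not transfer to $(Q,\cdot)$ in any straightforward way, because normal subloops do not correspond under $x\circ y=((xy^2)x)^{1/2}$: for instance, $B_9$ has associated right Bruck loop $C_3^3$, every subloop of which is normal, while $B_9$ itself has trivial center. Closing either of these routes is precisely the content of Nagy's lemma, so the proposal as written restates the difficulty rather than resolving it.
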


\begin{corollary}\label{nns}
	Let $p$ be an odd prime and $Q$ a Bol $p$-loop of order bigger than $p$. Then $Q$ has a nontrivial proper normal subloop.
\end{corollary}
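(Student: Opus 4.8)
The plan is to deduce this directly from Nagy's solvability theorem (Theorem~\ref{Nagy}), together with the elementary fact that a nontrivial solvable loop has a nontrivial abelian quotient. Concretely, if $Q$ is a Bol $p$-loop with $|Q| > p$, then $Q$ is solvable by Theorem~\ref{Nagy}, so the derived subloop $Q'$ is a proper normal subloop of $Q$ (proper because $Q/Q'$ is a nontrivial abelian group — if $Q' = Q$, the derived series would never reach the trivial loop, contradicting solvability). If $Q' \ne 1$, we are done immediately. The remaining case is $Q' = 1$, i.e. $Q$ is itself an abelian group; then since $|Q| = p^n$ with $n \ge 2$, the group $Q$ has a subgroup of order $p$, which is normal (abelian groups have all subloops normal) and proper and nontrivial.

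The key steps, in order: (1) invoke Theorem~\ref{Nagy} to get solvability of $Q$; (2) observe that solvability forces $Q' \subsetneq Q$, since $Q/Q'$ is abelian and nontrivial; (3) if $Q' \ne 1$, take $N = Q'$ and stop; (4) if $Q' = 1$, then $Q$ is an abelian $p$-group of order $p^n$ with $n\ge 2$, and Cauchy's theorem (or just the structure of finite abelian groups) supplies a subgroup of order $p$, which serves as $N$. One should note in passing that normality of subloops in the solvable/abelian-group setting is the standard one from the paper's definitions: $N \unlhd Q$ means $\varphi(N) = N$ for all $\varphi \in \inn{Q}$, and this is automatic for $Q'$ (it is normal by definition of the derived subloop) and for any subgroup of an abelian group.

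I do not anticipate a genuine obstacle here; the only thing to be slightly careful about is the bookkeeping in step (2) — making explicit that "solvable and nontrivial" rules out $Q' = Q$ — and in step (4), ensuring we are entitled to use ordinary finite group theory, which we are, since in that case $Q$ literally is a group. An alternative route that avoids Nagy entirely in part of the argument would be to combine Theorem~\ref{Th:Structure} with the observation that case~(i) (trivial center) still has $Q' \ne Q$ by solvability; but since we already have Theorem~\ref{Nagy} available, the uniform argument above is cleanest.
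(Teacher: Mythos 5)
Your proposal is correct and follows essentially the same route as the paper: invoke Theorem~\ref{Nagy} to get $Q'<Q$, take $N=Q'$ if it is nontrivial, and otherwise note that $Q$ is an abelian $p$-group of order at least $p^2$ and hence has a proper nontrivial (normal) subgroup. You merely spell out the last step (via Cauchy's theorem) that the paper leaves implicit with ``we are done.''
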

\begin{proof}
By Theorem \ref{Nagy}, $Q$ is solvable, so $Q'<Q$. If $Q'=1$ then $Q$ is an abelian $p$-group and we are done. Otherwise $1<Q'<Q$ is the sought after normal subloop.
\end{proof}

\begin{proposition}
Let $p$ be an odd prime and let $Q$ be a Bol loop of order $p^3$. Then $Q$ contains a normal subloop of order $p^2$.
\end{proposition}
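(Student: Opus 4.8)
The plan is to get \emph{some} nontrivial proper normal subloop of $Q$ essentially for free from solvability, and then to enlarge it to order $p^{2}$ inside a factor loop. Since $Q$ has order $p^{3}>p$, Corollary~\ref{nns} supplies a subloop $N$ with $1\ne N\ne Q$ and $N\unlhd Q$. Normality gives the index identity $|Q|=|N|\cdot|Q/N|$, so $|N|$ divides $p^{3}$ and hence $|N|\in\{p,p^{2}\}$. If $|N|=p^{2}$ there is nothing more to do, so the one case to analyse is $|N|=p$.

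In that case $Q/N$ is a Bol loop of order $p^{2}$, hence a group (as recalled above, Bol loops of order $p^{2}$ are groups), and every group of order $p^{2}$ is abelian. I would then pick any subgroup $\overline{H}\le Q/N$ of order $p$, note that $\overline{H}\unlhd Q/N$ since $Q/N$ is abelian, and take $H$ to be the full preimage of $\overline{H}$ in $Q$; by the correspondence theorem for loops $H\unlhd Q$, and $|H|=|\overline{H}|\cdot|N|=p^{2}$, which is the subloop we want.

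If one would rather route through Theorem~\ref{Th:Structure}, cases (ii) and (iii) can be handled directly---an abelian group of order $p^{3}$ has a subgroup of order $p^{2}$, and in case (iii) the preimage in $Q$ of an order-$p$ subgroup of $Q/Z(Q)\cong C_{p}\times C_{p}$ works---so that only case (i), $Z(Q)=1$, genuinely needs Corollary~\ref{nns}; but the factor-loop argument above covers all three cases uniformly, so I would simply state it once. I do not anticipate a real obstacle here: the substance is entirely in Nagy's solvability theorem, which feeds Corollary~\ref{nns}. The only points requiring a little care are that it is normality, not a full Lagrange property for Bol loops, that pins $|N|$ down to $\{p,p^{2}\}$, and that normal subloops of $Q/N$ pull back along the canonical projection to normal subloops of $Q$.
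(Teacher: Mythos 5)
Your proposal is correct and follows essentially the same route as the paper's proof: obtain a nontrivial proper normal subloop from Corollary~\ref{nns}, reduce to the case of order $p$, note that the factor loop is an abelian group of order $p^2$, and pull back a normal subgroup of order $p$ via the Correspondence Theorem. The remarks about normality (rather than Lagrange) pinning down the order, and about the alternative route through Theorem~\ref{Th:Structure}, are sound but do not change the argument.
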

\begin{proof}
By Corollary \ref{nns}, $Q$ contains a nontrivial proper normal subloop $H$. If $|H|=p^2$, we are done, so suppose that $|H|=p$. Then $|Q/H|=p^2$ and thus $Q/H$ is an abelian group. Hence there is $H/H<K/H<Q/H$ such that $K/H\unlhd Q/H$ and thus $K\unlhd Q$ and $|K|=p^2$ by the Correspondence Theorem.
\end{proof}

\subsection{The left nucleus and the right multiplication group}

Since $R_xR_yR_x$ is a right translation in right Bol loops, the right section $\{R_x:x\in Q\}$ is closed under the operation $(u,v)\mapsto uvu$. It then follows from \cite[Theorem 15]{Glauberman}:

\begin{theorem}[Glauberman]\label{Glaub}
Let $Q$ be a right Bol loop of odd order. Let $p$ be a prime. Then $p$ divides $|Q|$ if and only if $p$ divides $|\rmlt{Q}|$.
\end{theorem}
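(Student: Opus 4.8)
The plan is to prove the two implications separately, with the forward one elementary and the reverse one resting on Glauberman's theorem about twisted subgroups of odd order.

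First I would dispatch the implication ``$p\mid|Q|$ implies $p\mid|\rmlt{Q}|$'' by the orbit--stabilizer theorem. The finite group $\rmlt{Q}$ acts on the finite set $Q$, and this action is transitive because $R_x(1)=1\cdot x=x$ for every $x\in Q$; the stabilizer of $1$ is, by definition, $\rinn{Q}$. Hence $|Q|=[\rmlt{Q}:\rinn{Q}]$ divides $|\rmlt{Q}|$, so every prime dividing $|Q|$ divides $|\rmlt{Q}|$. Nothing about the Bol identity or odd order is used here.

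The reverse implication is the real content, and the strategy is to recognize the right section $T=\{R_x:x\in Q\}$ as a twisted subgroup of $G=\rmlt{Q}$ and then invoke \cite[Theorem 15]{Glauberman}. I would verify the three defining properties of a twisted subgroup: $1\in T$ since $R_1=\mathrm{id}$; $T=T^{-1}$ since right Bol loops have the right inverse property, so that $R_x^{-1}=R_{x^{-1}}\in T$; and $uvu\in T$ for all $u,v\in T$ because the right Bol identity \eqref{Eq:RBol} says precisely $R_yR_zR_y=R_{(yz)y}$, i.e.\ $R_yR_zR_y\in T$ with $u=R_y$, $v=R_z$. Since $\rmlt{Q}=\langle T\rangle$ by definition and $|T|=|Q|$ is odd, the hypotheses of Glauberman's theorem are met; what I would extract from it is that the primes dividing $|\langle T\rangle|=|\rmlt{Q}|$ are exactly those dividing $|T|=|Q|$ (informally, a ``Sylow theory'' holds for twisted subgroups of odd order, so $T$ already exhibits the full $p$-part of $|\rmlt{Q}|$ for every prime $p$). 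This gives ``$p\mid|\rmlt{Q}|$ implies $p\mid|Q|$'', and combining with the first implication completes the proof.

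The entire difficulty is concentrated in \cite[Theorem 15]{Glauberman}; everything else is routine bookkeeping with translations. The one point I would be careful to flag is that the odd-order hypothesis is essential and is used inside Glauberman's result (the underlying machinery relies on unique divisibility by $2$, i.e.\ unique square roots), so the statement does not extend to right Bol loops of even order. For the application in this paper, where $|Q|=27$, odd order is automatic.
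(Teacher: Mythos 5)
Your proposal is correct and follows essentially the same route as the paper, which likewise observes that the right Bol identity makes the right section $\{R_x:x\in Q\}$ closed under $(u,v)\mapsto uvu$ and then cites \cite[Theorem 15]{Glauberman}; you merely spell out the routine verifications (the twisted-subgroup axioms and the easy orbit--stabilizer direction) that the paper leaves implicit.
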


The following result was obtained by Foguel, Kinyon and Phillips in \cite[Theorem 6.4]{FKP}:

\begin{theorem}\label{Th:Lagrange}
Let $Q$ be a Bol loop of odd order and $H\le Q$. Then $|H|$ divides $|Q|$.
\end{theorem}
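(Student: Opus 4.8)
The plan is to transfer the problem to the right multiplication group $G=\rmlt{Q}$, where the odd-order hypothesis can be exploited group-theoretically. Write $U=\rinn{Q}$ for the stabiliser of $1$ in $G$ and $S=\{\rt_x:x\in Q\}$ for the right section. First I would record that $S$ behaves like a \emph{twisted subgroup}: it contains $1=\rt_1$; since right Bol loops have the right inverse property, $\rt_x^{-1}=\rt_{x^{-1}}\in S$; and reading \cref{Eq:RBol} in terms of right translations gives $\rt_y\rt_z\rt_y=\rt_{(yz)y}$, so that $S$ is closed under $(a,b)\mapsto aba$. Moreover $S$ is a transversal of $U$ in $G$ (the orbit map $g\mapsto g(1)$ sends $\rt_x$ to $x$) and $G=\langle S\rangle$, whence $|Q|=[G:U]$. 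By \cref{Glaub} the prime divisors of $|G|$ coincide with those of $|Q|$; hence $G$ has odd order, and so is solvable by the Feit--Thompson theorem.

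Next I would fix a subloop $H\le Q$ and set $S_H=\{\rt_x:x\in H\}\subseteq S$. Because $H$ is a subloop, the same two facts show that $S_H$ is itself a twisted subgroup of $G$. Put $K=\langle S_H\rangle$. Each $\rt_x$ with $x\in H$ permutes $H$, so $K$ leaves $H$ invariant; since $\rt_x(1)=x$, the $K$-orbit of $1$ is exactly $H$, and the elements $\rt_x$ ($x\in H$) form a transversal of $K\cap U$ in $K$, so that $|H|=[K:K\cap U]$. The goal now reduces to proving that the product set $S_HU$ is a subgroup of $G$: if it is, then, as the $\rt_x$ with $x\in H$ lie in pairwise distinct cosets of $U$, the set $S_H$ is a transversal of $U$ in $S_HU$, and therefore $[S_HU:U]=|H|$ divides $[G:U]=|Q|$.

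The hard part will be precisely this last assertion, that $S_HU=US_H$. This is where the odd-order hypothesis is indispensable: it is supplied by the structure theory of twisted subgroups in groups of odd order (Glauberman, Aschbacher), which is also where the solvability of $G$ enters. To lighten the burden, I would first run an easy induction on $|Q|$ that disposes of the non-simple case directly. If $N$ is a nontrivial proper normal subloop of $Q$, then the fibres of the projection $Q\to Q/N$ are the cosets of $N$, so that $|H|=|H\cap N|\cdot|HN/N|$ and $|HN|=|N|\cdot|HN/N|$; applying the inductive hypothesis to the smaller Bol loops $N$ and $Q/N$ gives $|H\cap N|\mid|N|$ and $|HN/N|\mid|Q/N|$, whence $|H|\mid|HN|\mid|Q|$. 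This leaves only the case in which $Q$ is simple --- which, by \cref{Nagy}, cannot then be a $p$-loop --- and it is only there that the twisted-subgroup machinery above is genuinely needed.
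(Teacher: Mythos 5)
First, a point of reference: the paper does not prove this theorem at all --- it is quoted from Foguel--Kinyon--Phillips \cite{FKP}, so your attempt is to be compared with that source. Your general framework (pass to $G=\rmlt{Q}$, note that the right section is a twisted subgroup closed under $(a,b)\mapsto aba$ by \cref{Eq:RBol}, use \cref{Glaub} and Feit--Thompson to make $G$ an odd-order solvable group, and induct to reduce to simple $Q$) is indeed the framework of \cite{FKP}, and your inductive step is correct: the fibres $H\cap hN=h(H\cap N)$ all have size $|H\cap N|$ because $H$ is closed under left division, so $|H|=|H\cap N|\cdot|HN/N|$ divides $|N|\cdot|HN/N|=|HN|$, which divides $|Q|$.

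The gap is the step you yourself flag as hard, and it is worse than an omission: the statement you reduce to is false. Since $S_HU=\bigl\{g\in G:g(1)\in H\bigr\}$ contains $U$, closure of $S_HU$ under multiplication forces $u(x)\in H$ for every $u\in U=\rinn{Q}$ and $x\in H$; one checks easily that $S_HU$ is a subgroup of $G$ \emph{if and only if} $H$ is $\rinn{Q}$-invariant, and general subloops are not. Concretely, by \cref{Pr:LeftNuc}(ii) an $\rinn{Q}$-invariant subloop of order $3$ must lie in $\lnuc{Q}$; in $B_9$ the left nucleus has order $9$ and exponent $3$ while $B_9$ has $26$ elements of order $3$, so the $18$ elements of order $3$ outside $\lnuc{B_9}$ generate order-$3$ subloops $H$ for which $S_HU$ is \emph{not} a subgroup --- even though $|H|=3$ does divide $27$. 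So no twisted-subgroup machinery can deliver $S_HU=US_H$, and the divisibility must be obtained differently. The route actually taken in \cite{FKP} is: $T_H=\{\rt_x:x\in H\}$ is a twisted subgroup of the odd-order (hence uniquely $2$-divisible) group $G$ contained in the twisted subgroup $T=\{\rt_x:x\in Q\}$; equipping $T$ with the Bruck loop operation $a\circ b=(ab^2a)^{1/2}$ (the same construction as the ``associated right Bruck loop'' in Section 3) makes $T_H$ a subloop of a Bruck loop of odd order, and Glauberman's Lagrange theorem for such loops gives $|H|=|T_H|$ divides $|T|=|Q|$. Even apart from the incorrect reduction, invoking ``the structure theory of twisted subgroups (Glauberman, Aschbacher)'' without stating and applying a specific theorem leaves the entire content of the proof unsupplied.
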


\begin{proposition}\label{Pr:LeftNuc}
Let $Q$ be a right Bol loop of odd order, let $p$ be the smallest prime dividing $|Q|$, and let $H>1$ be an $\rinn{Q}$-invariant subloop. Suppose that one of the following conditions holds:
\begin{enumerate}
	\item[(i)] $Q$ is a $p$-loop, or
	\item[(ii)] $|H| = p$.
\end{enumerate}
Then $H\cap \lnuc{Q} > 1$.
\end{proposition}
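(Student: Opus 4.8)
The plan is to rephrase the conclusion in terms of fixed points and then split into the two cases. Recall from the remarks above that $\lnuc{Q}$ is exactly the set of elements fixed by every map in $\rinn{Q}$, and that $H$ is $\rinn{Q}$-invariant; hence $\rinn{Q}$ acts on $H$ and $H\cap\lnuc{Q}$ is precisely the set of $\rinn{Q}$-fixed points lying in $H$. Since $1$ is such a point, it will suffice to produce a second one. The ingredient common to both cases is a restriction on the prime divisors of $|\rinn{Q}|$: since $\rinn{Q}\le\rmlt{Q}$, every prime dividing $|\rinn{Q}|$ divides $|\rmlt{Q}|$, and by Glauberman's Theorem \ref{Glaub} the primes dividing $|\rmlt{Q}|$ are exactly those dividing $|Q|$; as $p$ is the smallest such prime, every prime divisor of $|\rinn{Q}|$ is at least $p$.

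In case (i), $|Q|$ is a power of $p$, so $|\rmlt{Q}|$---and hence $|\rinn{Q}|$---is a power of $p$, i.e.\ $\rinn{Q}$ is a $p$-group. By Theorem \ref{Th:Lagrange}, $|H|$ divides $|Q|$, so $p\mid|H|$ because $H>1$. I would then invoke the standard fact that a finite $p$-group acting on a finite set fixes a number of points congruent to the size of the set modulo $p$: applied to $\rinn{Q}$ acting on $H$ this gives $|H\cap\lnuc{Q}|\equiv|H|\equiv 0\pmod p$, and since this number is at least $1$ it is at least $p>1$.

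In case (ii), $|H|=p$, so the action of $\rinn{Q}$ on $H$ fixes $1$ and permutes the remaining $p-1$ elements, giving a homomorphism from $\rinn{Q}$ into $\mathrm{Sym}(H\setminus\{1\})$ whose image has order dividing $\gcd(|\rinn{Q}|,(p-1)!)$. Every prime divisor of $(p-1)!$ is smaller than $p$, whereas every prime divisor of $|\rinn{Q}|$ is at least $p$, so this $\gcd$ equals $1$; hence the image is trivial, $\rinn{Q}$ fixes $H$ pointwise, and $H=H\cap\lnuc{Q}>1$.

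I do not expect a real obstacle here; the proof is short. The two points that require care are to use only the $\rinn{Q}$-action---the hypothesis supplies $\rinn{Q}$-invariance, not normality, of $H$---and to notice that Glauberman's theorem is precisely the tool that makes the ``smallest prime'' hypothesis usable, since without it there is no control over the prime divisors of $|\rinn{Q}|$.
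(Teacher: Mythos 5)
Your proof is correct and follows essentially the same route as the paper's: both reduce the claim to finding nonidentity fixed points of the $\rinn{Q}$-action on $H$, use Glauberman's Theorem \ref{Glaub} to control the primes dividing $|\rmlt{Q}|$ (hence $|\rinn{Q}|$), and then count via orbits --- your $p$-group fixed-point congruence in case (i) and your $\gcd$ argument in case (ii) are just standard rephrasings of the paper's observation that every nontrivial orbit has size a power of $p$, respectively size at least $p$.
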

\begin{proof}
By Theorem \ref{Glaub}, the primes dividing $|Q|$ coincide with the primes dividing $|\rmlt{Q}|$. Since $H$ is $\rinn{Q}$-invariant and $1$ is a fixed point of $\rinn{Q}$, $H\backslash \{1\}$ is $\rinn{Q}$-invariant. Any $\rinn{Q}$-invariant subset of $Q$ is a union of $\rinn{Q}$-orbits.

In case (i), every orbit of $\rinn{Q}$ has size a power of $p$. Since $|Q|$ is a power of $p$, so is $|H|$, by Theorem \ref{Th:Lagrange}. Since $\rinn{Q}$ fixes $1$, $H\backslash \{1\}$ must contain at least $p-1$ additional fixed points of $\rinn{Q}$.

In case (ii), every nontrivial orbit of $\rinn{Q}$ has size at least $p$ since $p$ is the smallest prime dividing $|\rmlt{Q}|$. Since $|H\backslash \{1\}| = p-1$, $H\backslash \{1\}$ also consists of fixed points of $\rinn{Q}$.

In both cases the proof is completed by noting that $\lnuc{Q}$ is precisely the set of fixed points of $\rinn{Q}$.
\end{proof}

\subsection{Normal subloop of order $3$}

\begin{proposition}[\cite{KP_comm}, Theorem~1.1]\label{Prp:comm_sbl}
Let $Q$ be a finite Bol loop of odd order. Then $\com{Q}$ is a subloop of $Q$.
\end{proposition}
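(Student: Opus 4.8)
The plan is to check that $\com Q$ is closed under the three loop operations of $Q$; it obviously contains $1$. The tools I would use are that right Bol loops are right power alternative and have the right inverse property, together with the autotopism characterization: $Q$ is right Bol if and only if $(R_x^{-1},\, R_xL_x,\, R_x)$ is an autotopism of $Q$ for every $x$ (only the forward direction is needed). A useful reduction comes first: closure under the divisions is automatic once closure under multiplication is known. Indeed, fix $a\in\com Q$; then $L_a=R_a$, so $R_aL_a=R_aR_a=R_{a^2}$ by right power alternativity, and the autotopism above specializes to $(ua^{-1})(va^2)=(uv)a$ for all $u,v$. Putting $u=1$ gives $a^{-1}(va^2)=va$, i.e. $L_{a^{-1}}R_{a^2}=R_a$; since $R_{a^2}^{-1}=R_{a^{-2}}$ and $R_aR_{a^{-2}}=R_{a^{-1}}$ (right inverse property and right power alternativity), this forces $L_{a^{-1}}=R_{a^{-1}}=R_a^{-1}=L_a^{-1}$. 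Hence $a^{-1}\in\com Q$, and every $a\in\com Q$ is an inverse-property element, so $u\ldiv v=u^{-1}v$ and $v\rdiv u=vu^{-1}$ whenever $u\in\com Q$. Thus it suffices to show $\com Q$ is closed under multiplication.

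So the whole matter reduces to: $a,b\in\com Q\Rightarrow ab\in\com Q$, equivalently $(ab)x=x(ab)$ for all $x$. This is precisely the property that can fail for Bol loops of even order (\cite{KPV_comm}), so the odd-order hypothesis must enter decisively here. The natural machinery is unique $2$-divisibility: since $|Q|$ is odd, squaring is a permutation of $Q$ (Glauberman), every $x$ has a unique square root $x^{1/2}\in\langle x\rangle$, and the associated right Bruck loop $(Q,\circ)$, $x\circ y=((xy^2)x)^{1/2}$, is available and has the same unit and the same powers as $(Q,\cdot)$. I would try to (i) prove that the commutant of $(Q,\cdot)$ and the commutant of $(Q,\circ)$ coincide, unwinding $a\circ y$ and $y\circ a$ with the identities already derived for commutant elements (note $(ay^2)a=y^2a^2$ for $a\in\com Q$, since $a$ commutes and $Q$ is right power alternative) together with injectivity of squaring; and (ii) prove that the commutant of the right Bruck loop $(Q,\circ)$ is closed under $\circ$ — here the automorphic inverse property $(x\circ y)^{-1}=x^{-1}\circ y^{-1}$, unique $2$-divisibility, and the right Bol identity for $\circ$ supply leverage not present in the general Bol setting, e.g. by first showing $c\in\com\Rightarrow c^2\in\com$, hence $\langle c\rangle\subseteq\com$, and then handling products of commuting commutant elements. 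Combining (i) and (ii) with the fact that the common commutant, being a subloop of the odd-order loop $(Q,\circ)$, is closed under $\circ$ and under square roots, one then backs out that $\com Q$ is closed under $\cdot$.

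The main obstacle is clearly this product-closure, namely parts (i) and (ii) and, within them, extracting genuine mileage from the odd order; everything preceding it is formal. Two cautions: establishing that the two commutants agree, and that the $\circ$-commutant is a subloop, are themselves the real work, not routine; and the associated Bruck loop $(Q,\circ)$ does not determine $(Q,\cdot)$ — in Table~\ref{Tb:Invariants}, for instance, $B_1$ and $B_5$ already share a Bruck loop — so the transfer between $(Q,\circ)$ and $(Q,\cdot)$ must be carried out on the common underlying set rather than through a structural correspondence of loops. I would expect to rely on Glauberman's structure theory for Bol loops of odd order to make this rigorous.
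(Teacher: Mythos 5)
The paper does not prove this proposition; it cites Kinyon--Phillips \cite{KP_comm}, so the comparison is against that proof. Your opening reduction is fine: using the autotopism $(R_a^{-1},L_aR_a,R_a)$, the right inverse property and right power alternativity, you correctly show that $a\in\com{Q}$ forces $a^{-1}\in\com{Q}$ and that closure under the divisions follows from closure under multiplication. But closure under multiplication is the entire content of the theorem, and there you have only a plan, not a proof. You say so yourself (``the real work, not routine''), and the plan has a concrete circularity: to show that the $\cdot$-commutant is contained in the $\circ$-commutant you must compare $a\circ y=((ay^2)a)^{1/2}$ with $y\circ a=((ya^2)y)^{1/2}$, and simplifying the latter requires knowing that $a^2$ commutes with $y$ --- which is (a special case of) exactly what is being proved. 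Neither step (i) (the two commutants coincide) nor step (ii) (the commutant of an odd-order right Bruck loop is a subloop) is established, and (ii) is not an off-the-shelf fact. Even granting both, ``backing out'' closure under $\cdot$ needs the additional unproved fact that squaring maps $\com{Q}$ into itself. So the proposal, as written, does not prove the proposition.

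For contrast, the actual argument in \cite{KP_comm} is much more direct and avoids the associated Bruck loop entirely. The key lemma, proved by a short computation with the right Bol identity and right power alternativity, is that $a,b\in\com{Q}$ implies $a^2b\in\com{Q}$ (in particular, with $b=1$, the commutant is closed under squaring). Oddness of $|Q|$ then enters only once: every element of $Q$ has odd order, so squaring is a bijection of $Q$ that restricts to a bijection of $\com{Q}$; hence any $a\in\com{Q}$ equals $c^2$ for some $c\in\com{Q}$, and $ab=c^2b\in\com{Q}$. If you want to complete your write-up, I would drop the Bruck-loop detour and instead prove the identity $a^2b\in\com{Q}$ directly; your reduction of the division operations can then be kept as is (or replaced by the same square-root trick).
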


\begin{lemma}\label{Lem:rnuccomm}
Let $Q$ be a finite right Bol loop of odd order and let $H$ be a subgroup of $\lnuc{Q}$ of order $3$. Suppose that $T_x(H)\subseteq H$ for all $x\in Q$. Then $H\leq \com{Q}$.
\end{lemma}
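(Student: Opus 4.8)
The plan is to analyze how the maps $T_x$ act on $H$ and to show this action is always trivial. Since $|H|=3$, write $H=\langle a\rangle\cong C_3$ with $|a|=3$. Each $T_x=L_x^{-1}R_x$ is a bijection of $Q$ fixing $1$, so by hypothesis $T_x|_H$ is a permutation of the three-element set $H$ fixing $1$; the only such permutations are the two automorphisms of $C_3$, hence $T_x|_H\in\aut{H}$, a group of order $2$. Define $\varepsilon(x)\in\{1,-1\}$ by $T_x(a)=a^{\varepsilon(x)}$; then $T_x(h)=h^{\varepsilon(x)}$ for all $h\in H$, which unwinds to $hx=xh^{\varepsilon(x)}$, and since $T_x|_H$ has order dividing $2$ also $xh=h^{\varepsilon(x)}x$. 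The whole lemma now reduces to proving $\varepsilon\equiv 1$: for then $hx=xh$ for all $h\in H$ and $x\in Q$, i.e.\ $H\subseteq\com{Q}$, and $\com{Q}$ is a subloop by Proposition~\ref{Prp:comm_sbl}, so $H\le\com{Q}$.

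The engine of the argument is a specialization of the right Bol identity \cref{Eq:RBol} in which an element $h\in H$ plays the role of the repeated variable: for all $x,z\in Q$,
\begin{displaymath}
	\bigl((xh)z\bigr)h = x\bigl((hz)h\bigr).
\end{displaymath}
Rewriting both sides using $hw=wh^{\varepsilon(w)}$ and $wh=h^{\varepsilon(w)}w$, the left-nucleus property of $H$ (to re-associate products whose leftmost factor lies in $H$), and right power alternativity of $Q$ (to collapse $(wa^i)a^j=wa^{i+j}$), one obtains, taking $h=a$,
\begin{displaymath}
	\bigl(a^{\varepsilon(x)}a^{\varepsilon(xz)}\bigr)(xz) = x\bigl(z(a^{\varepsilon(z)}a)\bigr)
\end{displaymath}
for all $x,z\in Q$; note that no information about the middle or right nucleus of $Q$ is required. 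The key point is that if $\varepsilon(z)=-1$ for some $z=z_0$, then $a^{\varepsilon(z_0)}a=a^{-1}a=1$, so the right-hand side collapses to $xz_0$, while the left-hand side equals $a^{\varepsilon(x)+\varepsilon(xz_0)}(xz_0)$; cancelling the factor $xz_0$ forces $\varepsilon(x)+\varepsilon(xz_0)\equiv 0\pmod 3$, and since $\varepsilon$ takes values in $\{1,-1\}$ this says $\varepsilon(xz_0)\ne\varepsilon(x)$, for every $x\in Q$.

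To finish, I would argue by contradiction. Suppose $\varepsilon$ is not identically $1$ and pick $z_0$ with $\varepsilon(z_0)=-1$. By the previous paragraph, right multiplication by $z_0$ swaps the two values of $\varepsilon$; since $R_{z_0}^k=R_{z_0^k}$ by right power alternativity and $\varepsilon(1)=1$, induction gives $\varepsilon(z_0^k)=(-1)^k$ for all $k\ge 0$. Taking $k=|z_0|$ yields $1=\varepsilon(1)=\varepsilon(z_0^{|z_0|})=(-1)^{|z_0|}$, so $|z_0|$ is even. But $Q$ is finite and right power alternative, so $|z_0|$ divides $|Q|$, which is odd --- a contradiction. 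Hence $\varepsilon\equiv 1$ and the lemma follows.

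I expect the one genuinely creative step to be the choice made in the second paragraph: substituting an element of $H$ for the repeated variable of \cref{Eq:RBol}, and recognizing that the resulting identity trivializes on one side exactly when $\varepsilon(z_0)=-1$. The accompanying rewriting is routine, but it must be carried out carefully, associating only those triples whose leftmost factor (or an adjacent pair of powers of $a$) lies in $H$, so that $H\le\lnuc{Q}$ and right power alternativity suffice and no further nucleus information about $Q$ is needed.
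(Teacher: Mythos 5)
Your proof is correct, and while it runs on the same engine as the paper's, the endgame is genuinely different. Both arguments begin identically: the hypothesis is encoded as the dichotomy $hx=xh^{\pm 1}$ (your $\varepsilon$), and an element of $H$ is substituted for the repeated variable of the right Bol identity, with $H\leq\lnuc{Q}$ used to re-associate and right power alternativity used to collapse adjacent powers. The paper then works element by element: assuming $ca\neq ac$ it derives $ca^2=a^2c$, rules out $ca^2\neq a^2c$ by a \emph{second} Bol-identity computation, and concludes because squaring is a bijection in odd order. You instead extract the single two-variable identity $a^{\varepsilon(x)+\varepsilon(xz)}(xz)=x\bigl(za^{\varepsilon(z)+1}\bigr)$, observe that it forces $\varepsilon(xz_0)=-\varepsilon(x)$ for every $x$ whenever $\varepsilon(z_0)=-1$, and reach a contradiction from $\varepsilon(z_0^k)=(-1)^k$ against the oddness of $|z_0|$. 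Specialized to $x=z=z_0$, your identity recovers the paper's first computation, but your finish needs only one Bol calculation and replaces the paper's second case analysis plus the squaring-bijection step with a parity argument on the order of $z_0$; both invocations of odd order ultimately rest on the same fact (element orders divide $|Q|$, equivalently squaring is bijective). Your version is slightly more systematic and would be a legitimate drop-in replacement; all the auxiliary facts you use (that $T_x$ fixes $1$ and restricts to $h\mapsto h^{\varepsilon(x)}$ on a $3$-element subgroup, that $z_0^kz_0=z_0^{k+1}$, and that $|z_0|$ divides $|Q|$) are available in the paper's framework.
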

\begin{proof}
Let $1\ne c\in H$ so that $H = \langle c\rangle$. Throughout the proof we will use $c\in\lnuc{Q}$. Since $T_x(H)=H$ and $T_x(1)=1$, we have $T_x(c) = c$ or $T_x(c) = c\inv$. Equivalently, for all $x\in Q$,
\begin{equation}\label{Eqn:or0}
	cx = xc\qquad\text{or}\qquad cx=xc\inv\,.
\end{equation}

Suppose that $ca\neq ac$ for some $a\in Q$, so that $ca=ac\inv$ by \eqref{Eqn:or0}. Then $ac\inv\cdot a = ca\cdot a = ca^2$ by the right alternative property, and hence $ca^2c\inv = (ac\inv\cdot a)c\inv = a(c\inv ac\inv) = a(c\inv ca) = a^2$ by the right Bol identity. Multiplying by $c$ on the right and using the right inverse property then yields $ca^2=a^2c$. We proved: for all $x\in Q$,
\begin{equation}\label{Eqn:or2}
 	cx = xc \qquad\text{or}\qquad cx^2 = x^2 c\,.
\end{equation}

We claim that, in fact, $cx^2=x^2c$ for all $x\in Q$. Suppose that $ca^2\ne a^2c$ for some $a\in Q$. By \eqref{Eqn:or2}, $ca=ac$. By \eqref{Eqn:or0} with $x=a^2$, $ca^2=a^2c^{-1}$, which yields $ca^2c=a^2$ by the right inverse property. Hence $a^2 = ca^2c = (ca\cdot a)c = (ac\cdot a)c = a(cac) = a(ac\cdot c) = a(ac^2)$ by the right power alternative and right Bol properties. Canceling $a$ on the left two times gives $c^2=1$, a contradiction. This establishes the claim.

Finally, since $Q$ has odd order, the mapping $Q\to Q$, $x\mapsto x^2$ is a bijection, so we have $cy = yc$ for all $y\in Q$. Therefore $c\in \com{Q}$.
\end{proof}

\begin{proposition}[\cite{KPV_comm}, Corollary~2.6]\label{Prp:KPV_comm}
Let $Q$ be a finite right Bol loop of odd order. Then $\com{Q}\cap \lnuc{Q} = Z(Q)$.
\end{proposition}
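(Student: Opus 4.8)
The plan is to prove the two inclusions separately. The inclusion $Z(Q)\subseteq\com{Q}\cap\lnuc{Q}$ is immediate from the definition $Z(Q)=\nuc{Q}\cap\com{Q}$ together with $\nuc{Q}\subseteq\lnuc{Q}$, so the entire content is the reverse inclusion $\com{Q}\cap\lnuc{Q}\subseteq Z(Q)$. Let $a\in\com{Q}\cap\lnuc{Q}$; we must show $a\in\nuc{Q}$, i.e.\ that $a$ lies in the middle and right nuclei as well (it is already in the left nucleus by hypothesis). Since $a$ commutes with everything, the three nucleus conditions will turn out to be equivalent, so it suffices to verify one of them, say $a\in\rnuc{Q}$, meaning $yz\cdot a = y\cdot za$ for all $y,z\in Q$.

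First I would record the elementary consequence that $\langle a\rangle$ is a central subgroup of $Q$ and hence a normal subloop, and that $Q/\langle a\rangle$ is again a right Bol loop of odd order. The natural strategy is induction on $|Q|$: in the quotient $\bar Q = Q/\langle a\rangle$ the image $\bar a$ is trivial, so this does not directly help; instead one should induct differently, or argue directly. The cleaner route is a direct computation exploiting the right Bol identity \cref{Eq:RBol}, the right inverse property, right power alternativity, and the fact (odd order) that squaring is a bijection. The key algebraic step will be to manipulate $((yz)a)\cdot(\text{something})$ using that $a$ is in the left nucleus (so $a$ can be associated to the \emph{left} freely) and that $a$ commutes, converting a left-nucleus associativity statement into a right-nucleus one via the Bol identity applied to a carefully chosen triple. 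Concretely, I expect to start from the Bol instance $((x\cdot a)\cdot z)\cdot a = x\cdot((a\cdot z)\cdot a)$, use $a\in\lnuc{Q}$ to rewrite $(x\cdot a)\cdot z = x\cdot(az)$ and $(az)\cdot a = a(za)$, and use commutativity $az=za$ to collapse terms, arriving at a relation forcing $a$ to associate on the right with arbitrary elements.

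The main obstacle I anticipate is that having $a$ in the left nucleus and in the commutant does not obviously give associativity with $a$ in the \emph{third} position of a product $xy\cdot a$; the left-nucleus property controls $a\cdot yz$, and commutativity lets one move $a$ past a single element, but moving $a$ from the left end to the right end of a triple product requires genuine use of the Bol identity, and one must be careful because Bol loops are not flexible or diassociative in general. The trick will be to isolate the three-variable expression and hit it with the right Bol identity in a form where one of the variables is $a$ and then use $a\in\lnuc Q$ to simplify both sides to expressions in which all occurrences of $a$ have been pushed to one side; since $a$ is central in the group-theoretic sense within $\langle a\rangle$ and commutes with all of $Q$, the resulting identity should simplify to exactly the right-nucleus condition. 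Once $a\in\rnuc{Q}$ is established, the symmetric bookkeeping (using that right Bol loops have the right inverse property and that $\lnuc Q$, $\rnuc Q$ behave well under the Bol identity) yields $a\in\mnuc{Q}$, hence $a\in\nuc{Q}$, and therefore $a\in\nuc{Q}\cap\com{Q}=Z(Q)$, completing the proof.
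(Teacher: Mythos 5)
First, a point of reference: the paper does not prove this proposition at all --- it is imported from \cite{KPV_comm}, Corollary~2.6 --- so your sketch has to stand entirely on its own. Your overall strategy (specialize the right Bol identity \cref{Eq:RBol} with $a$ in the $y$-slot and use $a\in\lnuc{Q}\cap\com{Q}$ to push every occurrence of $a$ to one end) is viable and can be completed, but as written there are two concrete gaps. (1) The one rewriting you actually commit to, ``use $a\in\lnuc{Q}$ to rewrite $(x\cdot a)\cdot z = x\cdot(az)$,'' is not a left-nucleus identity: $(xa)z=x(az)$ is precisely the middle-nucleus condition you are trying to establish, so invoking it here is circular. What $a\in\lnuc{Q}$ gives is $a\cdot xz=ax\cdot z$; to move $a$ you must first commute, as in $(xa)z=(ax)z=a\cdot xz=(xz)a$. (2) Once this is done correctly, the instance $((xa)z)a=x((az)a)$ simplifies (via commutativity, $a\in\lnuc{Q}$, and right power alternativity) to $(xz)a^2=x(za^2)$, which only places $a^2$ in $\rnuc{Q}$, not $a$. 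This is exactly where the odd-order hypothesis must do its work --- $|a|$ divides $|Q|$ and is therefore odd, so $\langle a\rangle=\langle a^2\rangle\le\rnuc{Q}$ --- and your sketch never locates this step, even though it lists ``squaring is a bijection'' among the available tools.

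With those two repairs the argument closes: from $a\in\lnuc{Q}\cap\com{Q}\cap\rnuc{Q}$ one gets $(ya)z=(ay)z=a\cdot yz=(yz)a=y\cdot za=y\cdot az$, so $a\in\mnuc{Q}$, hence $a\in\nuc{Q}\cap\com{Q}=Z(Q)$. (An alternative that reaches $\mnuc{Q}$ in one stroke is the instance $((xy)a)y=x((ya)y)$, which reduces to $(xa)y^2=x(ay^2)$ and then uses surjectivity of squaring in a right Bol loop of odd order.) So: right idea, but the decisive computation is not carried out, and the one step that is written down explicitly is mis-attributed and circular as stated.
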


\begin{theorem}\label{Thm:centralsubloop}
 Let $Q$ be a finite Bol loop of odd order and let $H$ be a normal subloop of order $3$. Then $H\leq Z(Q)$.
\end{theorem}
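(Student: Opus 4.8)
The plan is to chain together Proposition~\ref{Pr:LeftNuc}, Lemma~\ref{Lem:rnuccomm} and Proposition~\ref{Prp:KPV_comm}, showing successively that $H$ lies in the left nucleus, then in the commutant, and hence in the center.

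First I would note that, since $H\le Q$ has order $3$, Theorem~\ref{Th:Lagrange} (Lagrange's theorem for Bol loops of odd order) forces $3\mid|Q|$; as $|Q|$ is odd, this means that $p=3$ is the \emph{smallest} prime dividing $|Q|$. Moreover $H$, being a subloop of a Bol loop of prime order $3$, is a group, so $H\cong C_3$. Being normal in $Q$, the subloop $H$ is $\inn{Q}$-invariant, hence $\rinn{Q}$-invariant (note $\rinn{Q}\le\inn{Q}$). Thus we are in case (ii) of Proposition~\ref{Pr:LeftNuc} with $|H|=p=3$, and we conclude $H\cap\lnuc{Q}>1$. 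Since $H\cong C_3$ has no nontrivial proper subgroup, $H\cap\lnuc{Q}=H$, that is, $H\le\lnuc{Q}$.

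Next, every $\tm_x$ lies in $\inn{Q}$, so normality of $H$ gives $\tm_x(H)=H$ for all $x\in Q$. Hence $H$ satisfies the hypotheses of Lemma~\ref{Lem:rnuccomm}, which yields $H\le\com{Q}$. Combining this with $H\le\lnuc{Q}$ and applying Proposition~\ref{Prp:KPV_comm}, we obtain $H\le\com{Q}\cap\lnuc{Q}=Z(Q)$, as desired.

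Once the results of Section~\ref{Sc:Theory} are available there is no real obstacle here: the argument is just a short chain of inclusions. The only point needing a moment's thought is the remark that $3$ is automatically the smallest prime dividing $|Q|$, which is exactly what allows Proposition~\ref{Pr:LeftNuc}(ii) to be applied; the substantive content has already been packaged into Lemma~\ref{Lem:rnuccomm} and the cited Proposition~\ref{Prp:KPV_comm}.
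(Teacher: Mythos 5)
Your proposal is correct and follows essentially the same route as the paper: Proposition~\ref{Pr:LeftNuc} to get $H\le\lnuc{Q}$, normality to get $\tm_x(H)=H$, Lemma~\ref{Lem:rnuccomm} to get $H\le\com{Q}$, and Proposition~\ref{Prp:KPV_comm} to conclude $H\le Z(Q)$. You even supply a detail the paper leaves implicit (that $3$ is necessarily the smallest prime dividing $|Q|$, so that case (ii) of Proposition~\ref{Pr:LeftNuc} applies); the only thing the paper adds is the remark that the statement is self-dual, so it suffices to treat right Bol loops.
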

\begin{proof}
Since the statement is self-dual, it suffices to prove it for a right Bol loop $Q$. Since $|H|=3$, Proposition \ref{Pr:LeftNuc} yields $H\leq \lnuc{Q}$.  We have $T_x(H)=H$ because $H\unlhd Q$. Then Lemma \ref{Lem:rnuccomm} gives $H\leq \com{Q}$. By Proposition \ref{Prp:KPV_comm}, $H\leq Z(Q)$.
\end{proof}

\begin{corollary}\label{Cr:Likewise}
Let $Q$ be a Bol loop of order $27$. If either $|Z(Q)|=3$ or $|Q'|=3$ then $Z(Q)=Q'$.
\end{corollary}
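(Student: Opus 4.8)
The plan is to treat the two hypotheses separately, in each case using Theorem~\ref{Th:Structure} to pin down $|Z(Q)|$, and then invoking two general facts about loops: that $Q'=1$ precisely when $Q$ is an abelian group, and that if $Q/Z(Q)$ happens to be an abelian group then $Q'\le Z(Q)$, directly from the definition of $Q'$ as the least normal subloop with abelian-group quotient.

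First suppose $|Z(Q)|=3$. Then $Z(Q)$ is neither trivial nor all of $Q$, so case~(iii) of Theorem~\ref{Th:Structure} must hold and $Q/Z(Q)\cong C_3\times C_3$ is an abelian group; hence $Q'\le Z(Q)$. On the other hand $Q$ is not an abelian group (otherwise $Z(Q)=Q$ would have order $27$), so $Q'>1$. Since $|Z(Q)|=3$, this forces $Q'=Z(Q)$.

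Next suppose $|Q'|=3$. Then $Q'$ is a normal subloop of order $3$ in a Bol loop of odd order, so Theorem~\ref{Thm:centralsubloop} gives $Q'\le Z(Q)$; in particular $|Z(Q)|\in\{3,9,27\}$. If $|Z(Q)|=27$ then $Q$ is an abelian group and $Q'=1$, contradicting $|Q'|=3$. If $|Z(Q)|=9$ then $Q/Z(Q)$ has order $3$, hence is cyclic, so Proposition~\ref{Pr:CyclicFactor} makes $Q$ an abelian group and again $Q'=1$, a contradiction. Therefore $|Z(Q)|=3$, and together with $Q'\le Z(Q)$ and $|Q'|=3$ we conclude $Z(Q)=Q'$.

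I do not expect a genuine obstacle here: the corollary is a short bookkeeping argument layered on Theorems~\ref{Th:Structure} and~\ref{Thm:centralsubloop} and Proposition~\ref{Pr:CyclicFactor}. The only point needing a moment's care is the translation between ``the factor $Q/N$ is an abelian group'' and statements about $Q'$ (containment in $N$, and triviality of $Q'$ characterizing abelian groups among loops), so I would state those facts explicitly before starting the case analysis.
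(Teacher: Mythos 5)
Your proof is correct and follows essentially the same route as the paper: in the first case deduce $Q'\le Z(Q)$ from $Q/Z(Q)$ being an abelian group and use $Q'>1$; in the second case apply Theorem~\ref{Thm:centralsubloop} to get $Q'\le Z(Q)$ and rule out $|Z(Q)|>3$ via the structure theorem. The only cosmetic difference is that you handle the $|Z(Q)|=9$ subcase by citing Proposition~\ref{Pr:CyclicFactor} directly, where the paper just cites Theorem~\ref{Th:Structure}.
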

\begin{proof}
Suppose that $|Z(Q)|=3$. Then $Q/Z(Q)$ is an abelian group and hence $Q'\le Z(Q)$. Since $Q$ is not an abelian group, $1<Q'$. Hence $Q'=Z(Q)$.

Now suppose that $|Q'|=3$. Since $Q'\unlhd Q$, Theorem \ref{Thm:centralsubloop} implies $Q'\le Z(Q)$. If $|Z(Q)|>3$ then $Q$ is an abelian group by Theorem \ref{Th:Structure}, but then $Q'=1$, a contradiction. Hence $|Z(Q)|=3$ and $Q'=Z(Q)$ again.
\end{proof}

\begin{proposition}\label{Pr:LargeDerived}
Let $Q$ be a Bol loop of order $27$. If $Z(Q)=1$ then $|Q'|=9$.
\end{proposition}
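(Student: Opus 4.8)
The plan is to determine $|Q'|$ by elimination. Since $Q'\unlhd Q$ and $Q/Q'$ is an abelian group, $Q'$ is in particular a subloop of the odd-order Bol loop $Q$, so $|Q'|$ divides $|Q|=27$ by \cref{Th:Lagrange}; hence $|Q'|\in\{1,3,9,27\}$. It then suffices to rule out the three values $1$, $3$ and $27$.

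First I would dispose of $|Q'|=1$: in that case $Q=Q/Q'$ is an abelian group, so $Z(Q)=Q\ne 1$, contradicting the hypothesis. Next, $|Q'|=27$ would mean $Q'=Q$, but $Q$ is a Bol $3$-loop, hence solvable by \cref{Nagy}, and solvability of the nontrivial loop $Q$ forces the derived series to descend, so $Q'<Q$. Finally, if $|Q'|=3$ then \cref{Cr:Likewise} gives $Z(Q)=Q'$, whence $|Z(Q)|=3\ne 1$, again a contradiction. The only surviving possibility is $|Q'|=9$.

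There is no real obstacle here: the proposition is a bookkeeping consequence of results already in place (\cref{Th:Lagrange,Nagy,Cr:Likewise}, with \cref{Th:Structure} lurking behind \cref{Cr:Likewise}). The only points requiring a moment's care are invoking \cref{Th:Lagrange} to restrict $|Q'|$ to divisors of $27$, and noting that a nontrivial solvable loop cannot satisfy $Q'=Q$.
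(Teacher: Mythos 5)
Your proof is correct and follows essentially the same elimination argument as the paper: rule out $|Q'|=1$ (else $Q$ is abelian), $|Q'|=27$ (via solvability from \cref{Nagy}), and $|Q'|=3$ (via \cref{Cr:Likewise}), leaving $|Q'|=9$. The only difference is that you explicitly cite \cref{Th:Lagrange} to restrict $|Q'|$ to divisors of $27$, which the paper leaves implicit.
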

\begin{proof}
We have $1<Q'$, else $Q$ is an abelian group. We have $Q'<Q$ by Theorem \ref{Nagy}. If $|Q'|=3$ then $|Z(Q)|=3$ by Corollary \ref{Cr:Likewise}, a contradiction. Hence $|Q'|=9$.
\end{proof}

\section{Bol loops: Computational results}

\subsection{Nontrivial center}

Centrally nilpotent Bol loops of order $27$ are easy to handle computationally. In the \texttt{GAP} package \texttt{RightQuasigroups}, one can compute all (small) central extensions of a cyclic group $Z=C_p$ of prime order by a given loop $F$ in a given variety of loops containing abelian groups. The algorithm is based on cocycles, coboundaries and the action of the group $\aut{Z}\times\aut{F}$ on the space of cocycles. The method is described in detail (for the case of Moufang loops) in \cite{NV64}.

Given a prime $p$, the code
\begin{verbatim}
        LoadPackage( "RightQuasigroups" );
        C := CyclicGroup( p );
        F := AsLoop( DirectProduct( C, C ) );
        basis := [ "((x*y)*z)*y = x*((y*z)*y)" ];
        extensions := AllLoopCentralExtensionsInVariety( F, p, basis );
        loops := LoopsUpToIsomorphism( extensions );
\end{verbatim}
constructs all right Bol loops that are central extensions of $C_p$ by $C_p\times C_p$ up to isomorphism.

For $p=3$, it returns $12$ right Bol loops of order $27$, namely $4$ groups and the $8$ nonassociative right Bol loops $B_1,\dots,B_8$ (previously obtained by Moorhouse \cite{Moorhouse} using a different algorithm). The calculation takes a fraction of a second.

For $p=5$, it returns $14$ right Bol loops of order $125$, namely $4$ groups and $10$ nonassociative right Bol loops. The calculation takes less than $5$ seconds.

For $p=7$, it returns $16$ right Bol loops of order $343$, namely $4$ groups and $12$ nonassociative right Bol loops. The calculation takes about $2$ minutes.

Note that in all cases the algorithm (correctly) does not find the cyclic group of order $p^3$. Combining these results with Theorem \ref{Th:Structure}, we have:

\begin{theorem}\label{Th:Nilp}
There are $13$ (resp. $15$, $17$) centrally nilpotent right Bol loops of order $3^3$ (resp. $5^3$, $7^3$).
\end{theorem}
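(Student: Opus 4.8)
The plan is to read the count off from Theorem~\ref{Th:Structure} together with the computer enumeration described above. The first step is the elementary observation that a centrally nilpotent loop has nontrivial center: if $Z(Q)=1$, then the series $Q,\ Q/Z(Q),\dots$ is constant at $Q$ and never reaches the trivial loop. So let $Q$ be a centrally nilpotent right Bol loop of order $p^3$ with $p\in\{3,5,7\}$; then $Z(Q)>1$, and Theorem~\ref{Th:Structure} leaves exactly two possibilities: either $Q$ is an abelian group of order $p^3$, or $Z(Q)\cong C_p$ and $Q/Z(Q)\cong C_p\times C_p$.

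Next I would match these possibilities against the output of the cocycle-based algorithm, which by construction returns, up to isomorphism, exactly those right Bol loops of order $p^3$ admitting a central subgroup $Z\cong C_p$ with $Q/Z\cong C_p\times C_p$. In the second case $Q$ is itself such an extension (take $Z=Z(Q)$), so it appears on the list. In the first case, the abelian groups of order $p^3$ are $C_{p^3}$, $C_{p^2}\times C_p$ and $C_p\times C_p\times C_p$; the last two are central extensions of $C_p$ by $C_p\times C_p$ and hence also appear, while $C_{p^3}$ does not, since its quotient by the unique subgroup of order $p$ is the cyclic group $C_{p^2}$, not $C_p\times C_p$ --- as already noted, the algorithm correctly does not find the cyclic group of order $p^3$. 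Conversely, any loop the algorithm returns is a central extension of an abelian group by an abelian group and is therefore centrally nilpotent (of class at most $2$): $Q/Z(Q)$ is a quotient of $Q/Z\cong C_p\times C_p$, hence abelian, so the central series reaches the trivial loop after at most two further steps.

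Combining these observations, the centrally nilpotent right Bol loops of order $p^3$ are, up to isomorphism, precisely the loops returned by the algorithm together with $C_{p^3}$, and $C_{p^3}$ is not among the returned loops. Hence the total is one more than the number of loops returned, namely $12+1=13$ for $p=3$, $14+1=15$ for $p=5$, and $16+1=17$ for $p=7$. I expect no genuine obstacle here; the only points requiring care are confirming that the algorithm really enumerates, up to isomorphism, all loops realizing case (iii) of Theorem~\ref{Th:Structure}, and that the cyclic group of order $p^3$ --- the single centrally nilpotent Bol loop of this order omitted by the algorithm --- is indeed omitted and is indeed centrally nilpotent.
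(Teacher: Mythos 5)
Your proposal is correct and follows essentially the same route as the paper: the paper's proof consists precisely of the computational enumeration of central extensions of $C_p$ by $C_p\times C_p$ combined with Theorem \ref{Th:Structure}, with the cyclic group $C_{p^3}$ accounted for separately. You have merely spelled out the routine bookkeeping (centrally nilpotent $\Rightarrow$ nontrivial center, every returned loop is centrally nilpotent, only $C_{p^3}$ is omitted) that the paper leaves implicit.
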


The multiplication tables of the right Bol loops of Theorem \ref{Th:Nilp} can be downloaded from the website of the third author.

\begin{problem}
Determine the number of centrally nilpotent right Bol loops of order $p^3$ for every prime $p$.
\end{problem}

\subsection{Trivial center}\label{Ss:TrivialCenter}

In view of Theorem \ref{Th:Structure}, it remains to classify the Bol loops of order $27$ with trivial center. We do this in a roundabout way, taking advantage of the results from \cref{Sc:Theory} on $Q'$, $\lnuc{Q}$ and normal subloops of order $3$.

Let $Q$ be a right Bol loop of order $27$ with $Z(Q)=1$. By Proposition \ref{Pr:LargeDerived}, $|Q'|=9$. By Proposition \ref{Pr:LeftNuc}, $Q'\cap\lnuc{Q} > 1$. Thus there exists a subloop $N\leq\lnuc{Q}\cap Q'$ such that $|N|=3$.

We set up a \texttt{mace4} search for all right Bol loops of order $27$ containing a subloop $N\le\lnuc{Q}$ of order $3$ such that $N$ is contained in a normal subloop $M$ of order $9$. The search was split into two cases: $M\cong\Z_3\times\Z_3$ and $M\cong\Z_9$. Both searches ran to completion in approximately 20 minutes. The elementary abelian case generated $177$ models and the cyclic case generated $87$ models. The output of each search was imported into \texttt{GAP}. Using the \texttt{RightQuasigroups} package, each case's loops were sorted up to isomorphism with a representative loop extracted from each isomorphism class. The elementary abelian case yielded $11$ loops, $8$ of which were nonassociative. The cyclic case yielded $12$ loops, $8$ of which were nonassociative. Merging the results yielded $10$ nonassociative loops, namely the loops $B_1,\dots,B_{10}$.

\medskip

This completes the classification of Bol loops of order $27$.

\begin{theorem}\label{Th:Main}
There are $15$ right Bol loops of order $27$ up to isomorphism, including five groups.
\end{theorem}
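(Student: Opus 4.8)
The plan is to count the right Bol loops of order $27$ by splitting along the trichotomy of Theorem~\ref{Th:Structure}, which sorts them into three mutually exclusive classes: (i) those with $Z(Q)=1$; (ii) the abelian groups of order $27$; (iii) those with $Z(Q)\cong C_3$ and $Q/Z(Q)\cong C_3\times C_3$. The first thing I would record is that the loops in classes (ii) and (iii) are precisely the centrally nilpotent ones: an abelian group is nilpotent, and if $Z(Q)\cong C_3$ with $Q/Z(Q)\cong C_3\times C_3$ then $Q/Z(Q)$ is abelian, so $Z(Q/Z(Q))=Q/Z(Q)$ and the upper central series reaches $Q$ in two steps; conversely a nontrivial loop with trivial center is not centrally nilpotent. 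Thus it suffices to count the centrally nilpotent right Bol loops of order $27$ and the right Bol loops of order $27$ with trivial center, and then add.

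For the centrally nilpotent case I would invoke Theorem~\ref{Th:Nilp}: there are $13$ of them. Unwinding the computation behind that theorem (the central-extension enumeration of Section~5 together with Theorem~\ref{Th:Structure}), these $13$ loops are exactly the five groups of order $27$ --- classically, the abelian groups $C_{27}$, $C_9\times C_3$, $C_3^3$ and the two nonabelian groups of order $27$ --- together with the eight nonassociative loops $B_1,\dots,B_8$ of Section~\ref{Sc:Constructions}.

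For the trivial-center case, let $Q$ be a right Bol loop of order $27$ with $Z(Q)=1$. The key structural reductions are already in hand: Proposition~\ref{Pr:LargeDerived} gives $|Q'|=9$, and since $Q'\unlhd Q$ is in particular $\rinn{Q}$-invariant and $Q$ is a $3$-loop, Proposition~\ref{Pr:LeftNuc}(i) applied with $H=Q'$ gives $Q'\cap\lnuc{Q}>1$. As $Q$ is right power alternative, orders of elements divide $|Q|$, so a non-identity element of $Q'\cap\lnuc{Q}$ generates (a subgroup containing) a subloop $N$ of order $3$ lying inside $\lnuc{Q}$, and $M:=Q'$ is a normal subloop of order $9$ containing $N$. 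Hence every such $Q$ satisfies the hypotheses of the \texttt{mace4} search of Section~\ref{Ss:TrivialCenter}, which --- after reduction up to isomorphism --- yields exactly the nonassociative loops $B_1,\dots,B_{10}$; by Table~\ref{Tb:Invariants}, precisely $B_9$ and $B_{10}$ among these have trivial center. Since a nontrivial $p$-group has nontrivial center, no group lies in this class, so the trivial-center right Bol loops of order $27$ are exactly $B_9$ and $B_{10}$.

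Combining the two counts, the right Bol loops of order $27$ up to isomorphism are the five groups together with $B_1,\dots,B_{10}$, a total of $13+2=15$, of which five are groups. To finish I would check pairwise non-isomorphism: the five groups are classical and distinct, each differs from every $B_i$ because the $B_i$ are nonassociative, and the $B_i$ are mutually non-isomorphic since (as noted with Table~\ref{Tb:Invariants}) any two of them differ in the order of the automorphism group or in the number of elements of order $3$. The point demanding the most care --- the "main obstacle" --- is the trivial-center case: one must be certain that the theoretical constraints genuinely force $Q$ into the configuration assumed by the finite search (an order-$3$ subloop of $\lnuc{Q}$ contained in a normal subloop of order $9$), so that the search is exhaustive rather than merely illustrative; this is exactly what Propositions~\ref{Pr:LargeDerived} and~\ref{Pr:LeftNuc} provide. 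A secondary bookkeeping caution is to use the partition of Theorem~\ref{Th:Structure} honestly, so that the centrally nilpotent count and the trivial-center count do not overlap.
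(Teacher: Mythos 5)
Your proposal is correct and follows essentially the same route as the paper: the trichotomy of Theorem~\ref{Th:Structure}, the central-extension enumeration giving the $13$ centrally nilpotent loops of Theorem~\ref{Th:Nilp}, and Propositions~\ref{Pr:LargeDerived} and~\ref{Pr:LeftNuc} guaranteeing that every trivial-center example falls within the scope of the exhaustive \texttt{mace4} search, which contributes exactly $B_9$ and $B_{10}$. The only addition you make is the explicit non-isomorphism check via the invariants of Table~\ref{Tb:Invariants}, which the paper leaves implicit.
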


The ten nonassociative right Bol loops from \cref{Th:Main} are constructed in Section \ref{Sc:Constructions}.

\section*{Acknowledgment}

We thank the referee for several useful suggestions that streamlined the paper, particularly in Subsection \ref{Ss:TrivialCenter}.

\end{document}